\documentclass[journal, twoside]{IEEEtran}
\usepackage{cite}


\usepackage{siunitx} 
\usepackage{amsmath,amssymb,amsfonts,mathtools,tensor,bm} 
\usepackage{blindtext} 
\usepackage{graphicx}
\usepackage{physics}
\usepackage{xcolor}
\usepackage{soul}
\usepackage{flushend}

\usepackage{amsthm}

\newtheoremstyle{ieeeconf}
{0pt}   
{0pt}   
{\normalfont}  
{\parindent}       
{\itshape} 
{:}         
{ } 
{\thmname{#1} \thmnumber{#2}\thmnote{ (#3)}} 
\makeatletter
\renewenvironment{proof}[1][\proofname]{\par
	\pushQED{\qed}%
	\normalfont \topsep\z@
	\trivlist
	\item[\hskip\labelsep\indent
	\itshape
	#1\@addpunct{:}]\ignorespaces
}{%
	\popQED\endtrivlist\@endpefalse
}
\makeatletter

\theoremstyle{ieeeconf}

\usepackage{relsize}

\usepackage{booktabs}
\usepackage{tabu}
\usepackage{multirow}

\usepackage{pgfplots}
\usepackage{varwidth}

\newenvironment{talign}
{\align}
{\endalign}
\newenvironment{talign*}
{\csname align*\endcsname}
{\endalign}


\DeclareMathOperator*{\diag}{diag}
\DeclareMathOperator*{\myw}{w}
\DeclareMathOperator*{\myinp1np2}{\MakeUppercase{\romannumeral 4}} 
\newcommand{\myi}{\vb*{v}_{\text{\normalfont{o}}}}
\DeclareMathOperator*{\mya}{a}
\DeclareMathOperator*{\myb}{b}
\DeclareMathOperator*{\myc}{c}
\DeclareMathOperator*{\myit}{it}
\DeclareMathOperator*{\myd}{d}

\DeclareMathOperator*{\myg}{g}
\DeclareMathOperator*{\mysigma}{\sigma}
\DeclareMathOperator*{\myP}{P}

\renewcommand{\d}[1]{\dot{#1}} 
\newcommand{\mat}[1]{\begin{bmatrix*}[r]#1 
\end{bmatrix*}}
\newcommand{\matc}[1]{\begin{bmatrix*}[c]#1 
\end{bmatrix*}}

\newcommand{\RN}[1]{\textup{\uppercase\expandafter{\romannumeral#1}}}
\setcounter{MaxMatrixCols}{15}

\usepackage{algorithm} 
\usepackage{algpseudocode} 
\algnewcommand{\Initialize}[1]{%
	\State \textbf{initialize} {\raggedright #1}
}
\makeatletter
\algnewcommand{\Statey}[1]{\Statex \hskip\ALG@thistlm #1}
\makeatother
\makeatletter
\algnewcommand{\Statez}[1]{\State \hskip\ALG@thistlm #1}
\makeatother
\newcounter{step}
\newenvironment{step}[1][]{\refstepcounter{step}
	\!\!\textbf{Step~\thestep. #1} \rmfamily}{}
\algdef{SE}[SUBALG]{Indent}{EndIndent}{}{\algorithmicend\ }%
\algtext*{Indent}
\algtext*{EndIndent}

\pgfplotsset{compat=1.16}

\newtheorem{theorem}{Theorem}
\newtheorem{definition}{Definition}
\newtheorem{assumption}{Assumption}
\newtheorem{problem}{Problem}
\newtheorem{remark}{Remark}

\newtheorem{proposition}{Proposition}
\newtheorem{lemma}{Lemma}

\graphicspath{{./img/}}

\markboth{\copyright~2022 IEEE. This paper has been accepted for publication in the IEEE Transactions on Automatic Control.}{\copyright~2022 IEEE. This paper has been accepted for publication in the IEEE Transactions on Automatic Control.}

\begin{document}
\title{Excitation for Adaptive Optimal Control of \\Nonlinear Systems in Differential Games}

\author{Philipp~Karg*, Florian~K\"opf*, Christian~A.~Braun and~S\"oren~Hohmann
%
%
\thanks{P. Karg, F. K\"opf, C. A. Braun and S. Hohmann are with the Institute of Control Systems, Karlsruhe Institute of Technology, Karlsruhe, Germany.\\ *\textit{P. Karg and F. K\"opf are co-first authors.} (e-mail: philipp.karg@kit.edu, florian.koepf@kit.edu)}}

\maketitle

\begin{abstract}
This work focuses on the fulfillment of the Persistent Excitation (PE) condition for signals which result from transformations by means of polynomials. This is essential e.g. for the convergence of Adaptive Dynamic Programming algorithms due to commonly used polynomial function approximators.
As theoretical statements are scarce regarding the nonlinear transformation of PE signals, we propose conditions on the system state such that its transformation by polynomials is PE. To validate our theoretical statements, we develop an exemplary excitation procedure based on our conditions using a feed-forward control approach and demonstrate the effectiveness of our method in a nonzero-sum differential game.
In this setting, our approach outperforms commonly used probing noise in terms of convergence time and the degree of PE, shown by a numerical example.
\end{abstract}

\begin{IEEEkeywords}
Adaptive Dynamic Programming, Adaptive Optimal Control, Persistent Excitation.
\end{IEEEkeywords}

\section{Introduction} \label{sec:introduction}
Persistent Excitation (PE) plays a crucial role in the convergence analysis in system identification \cite{AAstrom.1966}, adaptive control \cite{Narendra.1987, Narendra.2005} and Adaptive Dynamic Programming (ADP, alias adaptive optimal control) \cite{Vamvoudakis.2010, Werbos.1992, Vamvoudakis.2011, Liu.2014, Johnson.2015, Zhang.2013}. To assess and achieve PE, the frequency content of a signal is important. For \textit{linear} algebraic and dynamic transformations, results are presented e.g. in \cite{Narendra.2005} and \cite{Boyd.1986} which show that the concept of PE is well studied in the linear setting. For example, if and only if the spectral measure of the scalar input of a linear dynamic system of order $n$ contains at least $n$ spectral lines, the system state is PE \cite[p.~255]{Narendra.2005} (see \cite{Mathelin.1990, Green.1986} for multi-dimensional inputs). However, the situation is significantly more challenging with \textit{nonlinear} transformations as additional frequencies might emerge and enhance PE or frequencies might cancel out and impede PE \cite{Lin.1999}. Unfortunately, nonlinear transformations are inevitable in the ADP context, even in the linear-quadratic setting.

Hence, the realization of excitation in ADP is still an open question and only done heuristically. In the hope of fulfilling the PE condition, ADP methods add probing noise \cite{Vamvoudakis.2011, Liu.2014, Johnson.2015, Zhao.2016, Qu.2019, Zhang.2013} or a sum of sinusoids \cite{Johnson.2015, Vamvoudakis.2016, Qu.2019} to the control input or repeatedly reset the system state \cite{Liu.2014}. 
However, none of these methods can guarantee a priori that the PE condition holds and ``no verifiable method exists to ensure PE in nonlinear systems" \cite{Johnson.2015}.
Furthermore, using probing noise may not be reasonable under application-specific requirements such as systems with low-pass characteristics\footnotemark.

Considering strategies to fulfill PE under nonlinear transformations, \cite{Adetola.2008} use a reference trajectory and an optional dither signal as excitation in nonlinear adaptive control.
In \cite{Lin.1999}, the fulfillment of PE through a chosen, exactly defined scalar reference trajectory can be checked in advance for strict-feedback systems. However, general results and procedures to design persistently excited signals are missing. Hence, approaches using alternative excitation conditions \cite{Padoan.2017, Chowdhary.2010, Kamalapurkar.2014, Kamalapurkar.2016} arose. For example, in \cite{Chowdhary.2010} (adaptive control) or \cite{Kamalapurkar.2014}, \cite{Kamalapurkar.2016} (ADP), data stored in a history stack needs to satisfy a rank condition.
Appropriate data is assumed to be available a priori (\cite[Condition~1]{Chowdhary.2010}, \cite[Assumption~2]{Kamalapurkar.2014}), but it remains unclear how to excite a system to generate such data.

\footnotetext{Human motor learning can be modeled by ADP \cite{Jiang.2014}. Thus, when modeling human-machine interaction by using game theory \cite{Flad.2017}, ADP for differential games is a promising approach when the partners do not know the control law and objective function of each other a priori. However, due to the low-pass characteristics of the human neuromuscular system \cite{Katzourakis.2014}, probing noise can prevent a meaningful learning process of the automation.}

In summary, no generic method for the analysis of PE under nonlinear transformations and no ensuing excitation procedure for ADP exists. The main contribution of this paper is given by sufficient conditions on the system state that guarantee PE under polynomial transformations which appear e.g. in ADP-based optimal control \cite{Werbos.1992, Vamvoudakis.2010} and nonzero-sum differential game scenarios \cite{Liu.2014, Vamvoudakis.2011, Johnson.2015, Qu.2019, Zhao.2016, Kamalapurkar.2014, Kopf.2018, Zhang.2013}.
These conditions are then used for an a priori calculation of excitation signals for ADP-based differential games, which generalize optimal control problems. Our sufficient conditions lead to degrees of freedom regarding amplitudes and frequencies of the system state in this exemplary excitation procedure that can be utilized in order to account for problem-specific requirements such as low-pass characteristics. Simulation results demonstrate the effectiveness and validity of our method.

In the next section, the excitation problem is stated. Our main result regarding sufficient conditions for the system state in order to guarantee PE is proposed in Section~{\ref{sec:excitation_conditions}}. An excitation procedure based on these conditions is given in Section~{\ref{sec:excitation_procedures}}. Simulation results in Section~{\ref{sec:simulation}} demonstrate the effectiveness and validity of our method before we conclude our paper.


\section{Problem Definition} \label{sec:problem}
\begin{definition}[cf. {\cite[Definition~6.2]{Narendra.2005}}] \label{def:pe_new}
	A piecewise continuously differentiable, bounded function (cf.~\cite[Definition~6.1]{Narendra.2005}) ${\vb*{\sigma}: \mathbb{R}_{\geq 0} \rightarrow \mathbb{R}^{h}}$ is called persistently excited (PE) for all $t \geq t_0$ if there exist positive constants $\alpha, \alpha_{\RN{1}}, T \in \mathbb{R}_{>0}$ such that the equivalent conditions (cf. \cite[Theorem~2.16]{Narendra.2005})
	\begin{align} 
		\int_{t}^{t+T} \vb*{\sigma}\vb*{\sigma}^{\intercal} \,\mathrm{d}\tau &\succeq \alpha\vb*{I}, \label{eq:pe_cond_1} \\
		\text{and}\quad	\frac{1}{T}\int_{t}^{t+T} \abs{\vb*{\sigma}^{\intercal}\vb*{\iota}} \,\mathrm{d}\tau &\geq \alpha_{\RN{1}}\label{eq:pe_cond_2}
	\end{align}
	hold for all $t \geq t_0$ and for any unit vector $\vb*{\iota} \in \mathbb{R}^{h}$.
\end{definition}

It is well known that an equation of the form
\begin{align} \label{eq:error_dequ}
	\dv{\tilde{\vb*{\theta}}}{t} = - \eta \vb*{\sigma}\vb*{\sigma}^{\intercal}\tilde{\vb*{\theta}}, \quad \eta > 0,
\end{align}
converges to zero exponentially with 
\begin{talign}
	\norm{\tilde{\vb*{\theta}}(kT)}_{2}&\leq\exp\left(\frac{k}{2}\ln{\rho}\right)\norm{\tilde{\vb*{\theta}}(0)}_{2}, \quad \forall k \in \mathbb{N}_{\geq 0}, \label{eq:exponential_decay} \\
	\rho&=1-\frac{2T\eta\alpha_{\RN{1}}^2}{\left(1+T\eta\sigma_{\textnormal{max}}^2\right)^2}, \quad \sigma_{\textnormal{max}}\geq\norm{\vb*{\sigma}}_{2}, \label{eq:rho}
\end{talign}
iff $\vb*{\sigma}$ is PE (cf.~\cite[Theorem~2.16]{Narendra.2005}, \cite[Technical Lemma~2]{Vamvoudakis.2010}).

In order to motivate the importance of \eqref{eq:error_dequ} in the ADP context, consider the system dynamics
\begin{align} \label{eq:dx}
	\begin{aligned}
		\dot{\vb*{x}} &= \vb*{f}(\vb*{x}) + \sum_{i=1}^{N}\vb*{g}_{i}(\vb*{x})\vb*{u}_{i} = \vb*{f}(\vb*{x}) + \vb*{g}(\vb*{x})\vb*{u}, 
	\end{aligned}
\end{align}
${\vb*{x}(0) = \vb*{x}_{0}}$, with system state ${\vb*{x}\in\mathbb{R}^n}$, control input $\vb*{u}_i\in\mathbb{R}^{p_i}$ of controller $i\in\mathcal{N}=\left\{1,2,\dots,N\right\}$, ${N\in\mathbb{N}_{\geq 1}}$, ${\vb*{f}: \mathbb{R}^{n} \rightarrow \mathbb{R}^{n}}$ and $\vb*{g}_{i}: \mathbb{R}^{n} \rightarrow \mathbb{R}^{n\times p_{i}}$. 
Here, $\vb*{g}(\vb*{x}) = \mat{\vb*{g}_{1}(\vb*{x}) & \vb*{g}_{2}(\vb*{x}) & \cdots & \vb*{g}_{N}(\vb*{x})}$, $\vb*{g}: \mathbb{R}^{n} \rightarrow \mathbb{R}^{n\times p}$, ${p = \sum_{i=1}^N p_i}$, and $\vb*{u} = \mat{\vb*{u}_{1}^\intercal & \vb*{u}_{2}^\intercal & \cdots & \vb*{u}_{N}^\intercal}^\intercal \in \mathbb{R}^{p}$. 
The initial time is $t_{0}=0$. Let $\vb*{f}(\vb*{0})=\vb*{0}$ and $\vb*{f}(\vb*{x})$, $\vb*{g}(\vb*{x})$ be Lipschitz continuous in a neighborhood $\mathcal{X} \subseteq \mathbb{R}^{n}$ of the origin and let the system~\eqref{eq:dx} be stabilizable on $\mathcal{X}$.
For admissible policies $\vb*{\mu}=\mat{\vb*{\mu}_1^\intercal & \cdots & \vb*{\mu}_N^\intercal}^\intercal$ \cite[Definition~1]{Vamvoudakis.2011} define value functions 
\begin{align} \label{eq:value_func}
	\begin{aligned}
		V_i^{\vb*{\mu}}(\vb*{x})=V_i^{\vb*{\mu}}(\vb*{x}(t)) = \int_{t}^{\infty} Q_{i}(\vb*{x}) + 	\sum_{j=1}^{N}\vb*{\mu}^{\intercal}_{j}\vb*{R}_{ij}\vb*{\mu}_{j} \,\mathrm{d}\tau,
	\end{aligned}
\end{align}
where $\vb*{\mu}_i=\vb*{\mu}_i(\vb*{x})$ is the feedback control law of player~$i$ and the $V_i$ are assumed to be continuously differentiable on $\mathcal{X}$. ${Q_{i}: \mathbb{R}^{n} \rightarrow \mathbb{R}}$ is a positive definite function \cite[p.~53]{Narendra.2005}, $\vb*{R}_{ij}\succeq \vb*{0}$, $\forall j\in\mathcal{N}, {i\neq j}$ and $\vb*{R}_{ii}=\vb*{R}_{ii}^\intercal\succ\vb*{0}$.
The aim is to find control laws $\vb*{\mu}_i^*$ where $\left\{\vb*{\mu}_1^*,\dots,\vb*{\mu}_N^*\right\}$ corresponds to a presumed to exist feedback Nash equilibrium \cite[Definition~4.1]{Basar.1998} of the $N$-player nonzero-sum differential game given by \eqref{eq:dx} and the minimization of \eqref{eq:value_func}.
 
Using an ADP formalism, a Nash strategy $\vb*{\mu}_i^*$ is deduced online with measurements of the time signals $\vb*{x}(t)$ and $\vb*{\mu}_j(t)$ and without knowing $V_j$ and $\vb*{\mu}_j(\vb*{x})$, $\forall j\neq i$. Common value function approximation 
\begin{equation} \label{eq:approx_value_i}
	V_{i}(\vb*{x}) = \vb*{\theta}^{\intercal}_{i}\vb*{\phi}_{i}(\vb*{x})
\end{equation}
with basis functions $\vb*{\phi}_{i}: \mathbb{R}^{n}\rightarrow \mathbb{R}^{h_i}$ and weights $\vb*{\theta}_{i} \in \mathbb{R}^{h_i}$ yields $\vb*{\sigma}_{i} = \d{\vb*{\phi}}_{i}(\vb*{x}(t))$ \cite{Vamvoudakis.2011}. 
If a policy iteration (see e.g. \cite[Algorithm~1]{Liu.2014} or \cite[Algorithm~1]{Song.2017}) is performed and $\vb*{\theta}_i$ is tuned using a gradient descent method during the policy evaluation step, \eqref{eq:error_dequ} results, where $\tilde{\vb*{\theta}}_i$ is the weight error \cite{Vamvoudakis.2010}. Thus, convergence results iff $\vb*{\sigma}_i$, $i\in\mathcal{N}$ is PE\footnote{Exact convergence and relation~\eqref{eq:error_dequ} follow if the value functions can be approximated exactly. Otherwise, convergence to a residual set results \cite[Technical Lemma~2]{Vamvoudakis.2010}. Throughout the paper we assume exact approximations since we focus on the excitation problem.}. The policy improvement step updates the policy according to
\begin{equation} \label{eq:approx_mu_i}
	{\vb*{\mu}_{i}(\vb*{x}) = - \frac{1}{2}\vb*{R}^{-1}_{ii}\vb*{g}^{\intercal}_{i}(\vb*{x})\left(\pdv{\vb*{\phi}_{i}(\vb*{x})}{\vb*{x}} \right)^{\intercal}\vb*{\theta}_{i}}
\end{equation}
(cf.~ \cite{Song.2017}).

\begin{assumption}\label{asm:phi_h}
	The $h_i$ elements of each basis function vector $\vb*{\phi}_{i}(\vb*{x})$ ($\forall i \in \mathcal{N}$) are chosen as
	\begin{align} \label{eq:assumption_phi}
		\begin{aligned}
			\phi_{i,\bar{h}}(\vb*{x}) = &\bar{f}_{i,\bar{h}} \prod_{j=1}^{n} x^{f_{i,j,\bar{h}}}_{j}, \quad \forall \bar{h}\in\{1,\dots,h_i\},
		\end{aligned}
	\end{align}
	$\bar{f}_{i,\bar{h}} \in \mathbb{R}_{\neq 0}, f_{i,j,\bar{h}} \in \mathbb{N}_{0}$, where at least for one $j$ $f_{i,j,\bar{h}} \neq 0$ holds and $\bar{f}_{i,\bar{h}}$ is finite.
\end{assumption}

\begin{problem} \label{problem:PE}
	Let  $\vb*{\phi}_{i}(\vb*{x})$ ($\forall i \in \mathcal{N}$) be consistent with Assumption~\ref{asm:phi_h}. Find $\vb*{x}(t)$ such that $\vb*{\sigma}_i=\d{\vb*{\phi}}_{i}(\vb*{x}(t))$ is PE $\forall i \in \mathcal{N}$.
\end{problem}


\section{Sufficient Conditions for PE} \label{sec:excitation_conditions}
Since $\vb*{\phi}_{i}(\vb*{x})$ is usually nonlinear, it cannot be handled by typical statements in the PE literature. For ease of notation, the index $i$ is omitted first and we examine, without loss of generality (w.l.o.g.), $\vb*{\phi}(\vb*{x}): \mathbb{R}^{n}\rightarrow \mathbb{R}^{h}$. We conclude with the consideration of all $N$ players. In order to find trajectories $\vb*{x}(t)$ that solve Problem~\ref{problem:PE}, we start by choosing a general structure for desired system states.

\begin{assumption}\label{asm:xd_o}
	The elements of the desired system states ${\vb*{x}_{\myd}\in\mathbb{R}^n}$ are chosen according to
	\begin{align} \label{eq:assumption_x_d}
		\begin{aligned}
			x_{\myd,o}	= &\sum_{j=1}^{m} g_{j,o}\sin(\omega_{j}t) + \sum_{j=1}^{m} \bar{g}_{j,o}\cos(\omega_{j}t),
		\end{aligned}
	\end{align}
	$g_{j,o},\,\bar{g}_{j,o} \in \mathbb{R}, \quad \forall o \in \{1,\dots,n\}$, where at least for one $j$ $g_{j,o}\neq 0$ or $\bar{g}_{j,o}\neq 0$ holds, $g_{j,o}, \bar{g}_{j,o}$ are finite, ${m\geq 1}$ and $\vb*{\omega} = \matc{\omega_{1} & \dots & \omega_{m}}^{\intercal} \in \mathbb{R}^{m}$ are frequency variables.
\end{assumption}

\begin{lemma} \label{lemma:trig_simplification}
	With Assumption~\ref{asm:phi_h} and \ref{asm:xd_o}, $\vb*{\phi}(\vb*{x}_{\myd})$ is given by
	\begin{align} \label{eq:phi_x_d_short}
		\phi_{\bar{h}}(\vb*{x}_{\myd}) = \sum_{l=1}^{L_{\bar{h}}} a_{l,\bar{h}} \sin({\myw}_{l,\bar{h}} t) + \sum_{k=1}^{K_{\bar{h}}} c_{k,\bar{h}} \cos(\bar{\myw}_{k,\bar{h}} t) + e_{\bar{h}}
	\end{align}
	$\forall \bar{h} \in \{1,...,h\}$, where $a_{l,\bar{h}}, c_{k,\bar{h}} \in \mathbb{R}_{\neq 0}$, $e_{\bar{h}} \in \mathbb{R}$ and 
	\begin{talign}
		{\myw}_{l,\bar{h}} &= \sum_{j=1}^{m} b_{j,l,\bar{h}} \omega_{j}, \\
		\bar{\myw}_{k,\bar{h}} &= \sum_{j=1}^{m} d_{j,k,\bar{h}} \omega_{j}
	\end{talign}
	with $b_{j,l,\bar{h}}, d_{j,k,\bar{h}} \in \mathbb{R}$. Furthermore, for the upper sum limits $L_{\bar{h}}, K_{\bar{h}} \in \mathbb{N}_{\geq 0}$, $L_{\bar{h}}+K_{\bar{h}}\geq1$ ($\forall \bar{h} \in \{1,\dots,h\}$) holds.
\end{lemma}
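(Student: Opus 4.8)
The plan is to substitute Assumption~\ref{asm:xd_o} into the monomial basis function~\eqref{eq:assumption_phi} and to recognise the outcome as a finite product of one-dimensional trigonometric polynomials, which is itself a trigonometric polynomial. Fix an arbitrary $\bar h \in \{1,\dots,h\}$ and, following the convention adopted above of dropping the player index, write $\phi_{\bar h}(\vb*{x}) = \bar f_{\bar h}\prod_{j=1}^{n} x_j^{f_{j,\bar h}}$ with total degree $D := \sum_{j=1}^{n} f_{j,\bar h} \geq 1$, which is positive by Assumption~\ref{asm:phi_h}. It is convenient to pass to phasor notation: by Assumption~\ref{asm:xd_o} each coordinate reads $x_{\myd,o}(t) = \sum_{j=1}^{m}\bigl(\gamma_{j,o}\,\mathrm{e}^{\mathrm{i}\omega_j t} + \overline{\gamma_{j,o}}\,\mathrm{e}^{-\mathrm{i}\omega_j t}\bigr)$ with $\gamma_{j,o} = \tfrac12(\bar g_{j,o} - \mathrm{i}\,g_{j,o}) \in \mathbb{C}$, so that $x_{\myd,o}$ is a complex-exponential sum supported on the finite frequency set $\{\pm\omega_1,\dots,\pm\omega_m\}$ whose coefficients do not all vanish.

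The core step is the elementary observation that such sums are closed under multiplication, because $\mathrm{e}^{\mathrm{i}\nu_1 t}\,\mathrm{e}^{\mathrm{i}\nu_2 t} = \mathrm{e}^{\mathrm{i}(\nu_1+\nu_2)t}$; this is the phasor form of the product-to-sum identities for $\sin$ and $\cos$. Expanding the finite product $\phi_{\bar h}(\vb*{x}_{\myd}) = \bar f_{\bar h}\prod_{j=1}^{n} x_{\myd,j}^{f_{j,\bar h}}$ hence yields a finite sum $\sum_{\nu} c_\nu\,\mathrm{e}^{\mathrm{i}\nu t}$ in which every frequency has the form $\nu = \sum_{j=1}^{m} b_j\,\omega_j$ with $b_j \in \mathbb{Z}$ and $\sum_j |b_j| \leq D$, since the product is a sum of $D$ terms each equal to $\pm\omega_k$. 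Since $\phi_{\bar h}(\vb*{x}_{\myd}(t))$ is real-valued, $c_{-\nu} = \overline{c_\nu}$, so grouping each conjugate pair via $c_\nu\mathrm{e}^{\mathrm{i}\nu t} + \overline{c_\nu}\mathrm{e}^{-\mathrm{i}\nu t} = 2\,\mathrm{Re}(c_\nu)\cos(\nu t) - 2\,\mathrm{Im}(c_\nu)\sin(\nu t)$ and setting $e_{\bar h} := c_0 \in \mathbb{R}$ for the constant term produces precisely the structure~\eqref{eq:phi_x_d_short}; after discarding the frequencies whose coefficient happens to vanish and relabelling, the surviving coefficients $a_{l,\bar h}, c_{k,\bar h}$ are nonzero, and $\myw_{l,\bar{h}} = \sum_j b_{j,l,\bar h}\omega_j$, $\bar{\myw}_{k,\bar{h}} = \sum_j d_{j,k,\bar h}\omega_j$ with integer — hence real — coefficients $b_{j,l,\bar h}, d_{j,k,\bar h}$, as claimed. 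Since $\bar h$ was arbitrary, this representation holds for every $\bar h$.

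The only point requiring genuine care — and where I expect the main, if modest, obstacle — is the lower bound $L_{\bar h} + K_{\bar h} \geq 1$, i.e. that the canonical form does not collapse to a pure constant. I would establish it by a leading-frequency argument. Assume, as in any excitation design, that the $\omega_j$ are nonzero, and take them positive without loss of generality (signs are absorbed into $g_{j,o},\bar g_{j,o}$ via the parity of $\sin,\cos$). Let $\Omega_j$ be the largest frequency occurring in $x_{\myd,j}$, which is positive by Assumption~\ref{asm:xd_o}. Then the frequency $\nu^\star := \sum_{j=1}^{n} f_{j,\bar h}\,\Omega_j$ is strictly larger than every other frequency produced in the expansion, because choosing anything other than the top exponential in some factor strictly lowers the total; its coefficient is the single multinomial contribution $\bar f_{\bar h}\prod_{j} \gamma_{\star,j}^{\,f_{j,\bar h}}$, which is nonzero since $\bar f_{\bar h}\neq 0$ and each $\gamma_{\star,j}\neq 0$ (at frequency $\Omega_j$ at least one sine/cosine coefficient of $x_{\myd,j}$ is nonzero). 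As $D \geq 1$, we have $\nu^\star > 0$, so this frequency survives the grouping with nonzero coefficient and at least one sine or cosine term remains, giving $L_{\bar h}+K_{\bar h}\geq 1$. (Equivalently, one may argue that a finite product of real-analytic functions none of which vanishes identically cannot vanish identically, and then exclude a nonzero constant by the same leading-frequency observation.)
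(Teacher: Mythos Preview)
Your proof is correct and follows essentially the same line as the paper's: both substitute the sinusoidal $\vb*{x}_{\myd}$ into the monomial $\phi_{\bar h}$, expand via (the phasor equivalent of) the multinomial theorem and product-to-sum identities, and collect terms. Your complex-exponential bookkeeping is a clean repackaging of the paper's real trigonometric identities, and your leading-frequency argument for $L_{\bar h}+K_{\bar h}\geq 1$ is in fact more explicit than the paper's, which simply asserts that this bound follows from the non-degeneracy requirements in Assumptions~\ref{asm:phi_h} and~\ref{asm:xd_o}.
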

\begin{proof}
	The proof is given in Appendix~\ref{app:proof_lemma_trig_simplification}.
\end{proof}

The frequencies ${\myw}_{l,\bar{h}}$ and $\bar{\myw}_{k,\bar{h}}$ in \eqref{eq:phi_x_d_short} are analyzed in order to derive excluding conditions on $\vb*{\omega}$ such that $\d{\vb*{\phi}}(\vb*{x}_{\myd})$ is PE. 

\begin{proposition} \label{proposition:def_omega}
	Let $\vb*{\phi}(\vb*{x}_{\myd})$ be as in Lemma~\ref{lemma:trig_simplification}. 
	Then, ${\vb*{\sigma} = \d{\vb*{\phi}}(\vb*{x}_{\myd})}$ is PE ${\forall \vb*{\omega} \in \Omega}$, if a non-empty set $\Omega \subseteq \mathbb{R}^{m}$ exists such that $\forall \vb*{\omega} \in \Omega$ and every non-zero constant $\vb*{\alpha} \in \mathbb{R}^{h}$
	\begin{align} \label{eq:alpha_sig}
		\vb*{\alpha}^{\intercal}\vb*{\sigma}(\vb*{x}_{\myd}) &= \vb*{\alpha}^{\intercal}\d{\vb*{\phi}}(\vb*{x}_{\myd}) \\
		&= \sum_{l=1}^{L^{(\alpha)}} a^{(\alpha)}_{l} \cos({\myw}^{(\alpha)}_{l}t) 
		+ \sum_{k=1}^{K^{(\alpha)}} c^{(\alpha)}_{k}\sin(\bar{\myw}^{(\alpha)}_{k}t)\nonumber
	\end{align}
	holds, where\footnote{The superscript $(\alpha)$ indicates a dependency on $\vb*{\alpha}$.} ${L^{(\alpha)} + K^{(\alpha)}\geq1}$ and
	\begin{equation*}
		\begin{aligned}[c]
			a^{(\alpha)}_{l} &\neq 0,\\
			{\myw}^{(\alpha)}_{l} &\neq 0,\\
			{\myw}^{(\alpha)}_{l_{1}} &\neq {\myw}^{(\alpha)}_{l_{2}},\\
			\bar{\myw}^{(\alpha)}_{k_{1}} &\neq \bar{\myw}^{(\alpha)}_{k_{2}},
		\end{aligned}
		\qquad
		\begin{aligned}[c]
			c^{(\alpha)}_{k} &\neq 0,\\
			\bar{\myw}^{(\alpha)}_{k} &\neq 0,\\
			{\myw}^{(\alpha)}_{l_{1}} &\neq -{\myw}^{(\alpha)}_{l_{2}},\\
			\bar{\myw}^{(\alpha)}_{k_{1}} &\neq -\bar{\myw}^{(\alpha)}_{k_{2}},\\
		\end{aligned}
	\end{equation*}
	$\forall l_{1/2} \in \left\{1,\dots,L^{(\alpha)}\right\}$, $l_{1}\neq l_{2}$ and ${\forall k_{1/2} \in \left\{1,\dots,K^{(\alpha)}\right\}}$, ${k_{1}\neq k_{2}}$. 
\end{proposition}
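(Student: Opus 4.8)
The plan is to verify the matrix condition~\eqref{eq:pe_cond_1} directly, from which the equivalent bound~\eqref{eq:pe_cond_2} follows via \cite[Theorem~2.16]{Narendra.2005}. Fix $\vb*{\omega}\in\Omega$. First I would note that, by Lemma~\ref{lemma:trig_simplification}, ${\vb*{\sigma}=\d{\vb*{\phi}}(\vb*{x}_{\myd})}$ is the derivative of a vector of sinusoids-plus-constants and hence itself a vector of sinusoids with finitely many terms and finite amplitudes and frequencies; therefore $\vb*{\sigma}$ is bounded and continuously differentiable, so Definition~\ref{def:pe_new} applies. Since differentiation annihilates the constants $e_{\bar h}$, every frequency occurring in any component $\sigma_{\bar h}$ is nonzero.

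The core object is the time-average matrix $\vb*{\Sigma}_{0}:=\lim_{\mathcal{T}\to\infty}\frac{1}{\mathcal{T}}\int_{0}^{\mathcal{T}}\vb*{\sigma}\vb*{\sigma}^{\intercal}\,\mathrm{d}\tau$, which exists because $\vb*{\sigma}\vb*{\sigma}^{\intercal}$ is a matrix-valued trigonometric polynomial and equals the matrix of its constant parts; it is symmetric and, being an average of rank-one positive semidefinite matrices, satisfies $\vb*{\Sigma}_{0}\succeq\vb*{0}$. The key step is to upgrade this to $\vb*{\Sigma}_{0}\succ\vb*{0}$: for any nonzero $\vb*{\alpha}\in\mathbb{R}^{h}$, $\vb*{\alpha}^{\intercal}\vb*{\Sigma}_{0}\vb*{\alpha}$ equals the mean square of $\vb*{\alpha}^{\intercal}\vb*{\sigma}$, and inserting the representation~\eqref{eq:alpha_sig} and expanding via the product-to-sum identities gives $\vb*{\alpha}^{\intercal}\vb*{\Sigma}_{0}\vb*{\alpha}=\frac{1}{2}\sum_{l=1}^{L^{(\alpha)}}(a^{(\alpha)}_{l})^{2}+\frac{1}{2}\sum_{k=1}^{K^{(\alpha)}}(c^{(\alpha)}_{k})^{2}$. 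Here the hypotheses that the $\myw^{(\alpha)}_{l}$ and $\bar{\myw}^{(\alpha)}_{k}$ are nonzero and pairwise distinct up to sign within each group are precisely what makes every cross term and doubled-frequency term average to zero (cross terms between the cosine and sine groups average to zero unconditionally), while $L^{(\alpha)}+K^{(\alpha)}\geq 1$ together with $a^{(\alpha)}_{l},c^{(\alpha)}_{k}\neq 0$ makes the right-hand side strictly positive. Hence $\lambda:=\lambda_{\min}(\vb*{\Sigma}_{0})>0$.

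Next I would write $\int_{t}^{t+T}\vb*{\sigma}\vb*{\sigma}^{\intercal}\,\mathrm{d}\tau=T\vb*{\Sigma}_{0}+\vb*{R}(t,T)$ with $\vb*{R}(t,T):=\int_{t}^{t+T}(\vb*{\sigma}\vb*{\sigma}^{\intercal}-\vb*{\Sigma}_{0})\,\mathrm{d}\tau$. Each entry of $\vb*{\sigma}\vb*{\sigma}^{\intercal}-\vb*{\Sigma}_{0}$ is a finite sum of sinusoids with nonzero frequencies (the constant parts having been removed) drawn from a fixed finite set, so their moduli are bounded below by some $\nu_{\min}>0$; integrating each such sinusoid over a window of length $T$ contributes at most $2(\text{amplitude})/\nu_{\min}$ in modulus, whence $\norm{\vb*{R}(t,T)}_{2}\leq B$ for a constant $B$ independent of $t$ and $T$. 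Therefore $\int_{t}^{t+T}\vb*{\sigma}\vb*{\sigma}^{\intercal}\,\mathrm{d}\tau\succeq(T\lambda-B)\vb*{I}$, and choosing any $T>B/\lambda$ yields~\eqref{eq:pe_cond_1} with $\alpha=T\lambda-B>0$, uniformly in $t$ and hence for every $t_{0}$. As $\vb*{\omega}\in\Omega$ was arbitrary, this proves the proposition.

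I expect the positive-definiteness of $\vb*{\Sigma}_{0}$ to be the main obstacle, as it is the only place where the full strength of hypothesis~\eqref{eq:alpha_sig} is used: not merely that $\vb*{\alpha}^{\intercal}\d{\vb*{\phi}}(\vb*{x}_{\myd})$ admits a sinusoidal representation, but that for \emph{every} direction $\vb*{\alpha}$ this representation retains at least one term with a nonzero coefficient and has nonzero, non-repeating (up to sign) frequencies, so that no hidden cancellation can drive the mean square of $\vb*{\alpha}^{\intercal}\vb*{\sigma}$ to zero. The remainder estimate is then routine; the one point to watch is that subtracting $\vb*{\Sigma}_{0}$ leaves no zero-frequency component, which is what keeps $\nu_{\min}$ strictly positive and makes the oscillatory integrals uniformly small.
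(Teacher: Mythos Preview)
Your argument is correct but follows a different route from the paper. The paper's proof is a two-line appeal to results in \cite{Narendra.2005}: the frequency conditions in~\eqref{eq:alpha_sig} make the summands linearly independent, so $\vb*{\alpha}^{\intercal}\vb*{\sigma}$ has at least one spectral line and is therefore PE as a scalar signal by \cite[Sublemma~6.1]{Narendra.2005}; since this holds for every nonzero $\vb*{\alpha}$, the vector signal $\vb*{\sigma}$ is PE by \cite[Lemma~6.2]{Narendra.2005}. You instead verify~\eqref{eq:pe_cond_1} directly: you form the time-average matrix $\vb*{\Sigma}_{0}$, use the hypothesis~\eqref{eq:alpha_sig} together with orthogonality of the sinusoids to obtain the Parseval-type identity $\vb*{\alpha}^{\intercal}\vb*{\Sigma}_{0}\vb*{\alpha}=\tfrac{1}{2}\sum_{l}(a^{(\alpha)}_{l})^{2}+\tfrac{1}{2}\sum_{k}(c^{(\alpha)}_{k})^{2}>0$, and then bound the oscillatory remainder $\vb*{R}(t,T)$ uniformly in $t$ and $T$. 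What your approach buys is a self-contained argument with an explicit lower bound $T\lambda_{\min}(\vb*{\Sigma}_{0})-B$ in~\eqref{eq:pe_cond_1}, independent of the cited lemmas; the paper's version is shorter but black-boxes the passage from ``$\vb*{\alpha}^{\intercal}\vb*{\sigma}$ PE for all $\vb*{\alpha}$'' to ``$\vb*{\sigma}$ PE'', which ultimately hides the same averaging-plus-remainder estimate inside \cite[Lemma~6.2]{Narendra.2005}.
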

\begin{proof}
	The signal in \eqref{eq:alpha_sig} has at least one spectral line under its associated conditions due to the linear independence of its summands. Hence, it is PE \cite[Sublemma~6.1]{Narendra.2005}. Since this holds for every constant non-zero vector $\vb*{\alpha} \in \mathbb{R}^{h}$, the PE of $\vb*{\sigma}(\vb*{x}_{\myd}) = \d{\vb*{\phi}}(\vb*{x}_{\myd})$ follows from \cite[Lemma~6.2]{Narendra.2005}.
\end{proof}

In line with Proposition~\ref{proposition:def_omega}, it is sufficient to find a non-empty set $\Omega$ containing real vectors $\vb*{\omega}$ such that \eqref{eq:alpha_sig} holds. To calculate such a set $\Omega$, we use the frequencies in $\sigma_{\bar{h}} = \d{\phi}_{\bar{h}}(\vb*{x}_{\myd})$ $(\forall \bar{h} \in \{1,...,h\})$ as starting point which can be directly deduced from \eqref{eq:phi_x_d_short}. First, we define two auxiliary sets $\Omega^{(1)}$ and $\Omega^{(2)}$. Then, our main theorem states that ${\Omega = \Omega^{(1)} \cap \Omega^{(2)}}$ is a set in the sense of Proposition~\ref{proposition:def_omega}.

\begin{definition}[$\Omega^{(1)}$: Uniqueness of the Frequencies in Each Element $\phi_{\bar{h}}(\vb*{x}_{\myd})$ ($\forall \bar{h} \in \{1,...,h\}$)]\label{def:Omega1_def}
	Let
	\begin{talign} \label{eq:omega_1}
		\begin{aligned}
			\Omega^{(1)} \! = &\biggl\{ \vb*{\omega}: \bigwedge\limits_{\bar{h}=1}^{h} \biggl( \bigwedge\limits_{l_{1}=1}^{L_{\bar{h}}-1} \bigwedge\limits_{l_{2}>l_{1}}^{L_{\bar{h}}} \left( {\myw}_{l_{1},\bar{h}} \! \neq \! {\myw}_{l_{2},\bar{h}} \wedge {\myw}_{l_{1},\bar{h}} \! \neq \! -{\myw}_{l_{2},\bar{h}} \right) \\  
			&\hphantom{\biggl\{}\wedge \! \bigwedge\limits_{k_{1}=1}^{K_{\bar{h}}-1} \bigwedge\limits_{k_{2}>k_{1}}^{K_{\bar{h}}} \left( \bar{\myw}_{k_{1},\bar{h}} \! \neq \! \bar{\myw}_{k_{2},\bar{h}} \wedge \bar{\myw}_{k_{1},\bar{h}} \! \neq \! -\bar{\myw}_{k_{2},\bar{h}} \right) \biggl) \biggl\}.
		\end{aligned}
	\end{talign}
\end{definition}

\begin{lemma} \label{lemma:simpl_omega_1}
	$\Omega^{(1)}$ in Definition~\ref{def:Omega1_def} can be rewritten as
	\begin{talign} \label{eq:omega_1_red}
		\Omega^{(1)} = \left\{ \vb*{\omega}: \bigwedge\limits_{z=1}^{Z_{1}} 	\vb*{c}^{\intercal}_{z}\vb*{\omega} \neq 0 \right\}, \quad Z_1 \in \mathbb{N}_{\geq 0}.
	\end{talign}
\end{lemma}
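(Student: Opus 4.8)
The plan is to show that every single inequality appearing in the big conjunction \eqref{eq:omega_1} is equivalent to a linear inequality of the form $\vb*{c}^{\intercal}\vb*{\omega}\neq 0$ for some constant vector $\vb*{c}\in\mathbb{R}^m$, and then to collect all of these into one finite conjunction, discarding trivial or duplicate ones. First I would recall from Lemma~\ref{lemma:trig_simplification} that each frequency has the affine-in-$\vb*{\omega}$ representation $\myw_{l,\bar{h}}=\sum_{j=1}^m b_{j,l,\bar{h}}\omega_j=\vb*{b}_{l,\bar{h}}^{\intercal}\vb*{\omega}$ and $\bar{\myw}_{k,\bar{h}}=\vb*{d}_{k,\bar{h}}^{\intercal}\vb*{\omega}$ with \emph{constant} coefficient vectors $\vb*{b}_{l,\bar{h}},\vb*{d}_{k,\bar{h}}$ that do not depend on $\vb*{\omega}$ (they are fixed once the polynomial basis $\vb*{\phi}$ and the amplitude structure of $\vb*{x}_{\myd}$ are fixed).

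The key step is then a termwise rewriting. A condition $\myw_{l_1,\bar h}\neq\myw_{l_2,\bar h}$ becomes $(\vb*{b}_{l_1,\bar h}-\vb*{b}_{l_2,\bar h})^{\intercal}\vb*{\omega}\neq 0$, a condition $\myw_{l_1,\bar h}\neq-\myw_{l_2,\bar h}$ becomes $(\vb*{b}_{l_1,\bar h}+\vb*{b}_{l_2,\bar h})^{\intercal}\vb*{\omega}\neq 0$, and analogously for the cosine frequencies with $\vb*{d}_{k,\bar h}$. Each of these is of the advertised form $\vb*{c}^{\intercal}\vb*{\omega}\neq0$. There are finitely many such conditions: for each $\bar h\in\{1,\dots,h\}$ there are $2\binom{L_{\bar h}}{2}$ sine-type inequalities and $2\binom{K_{\bar h}}{2}$ cosine-type inequalities, hence the total count is finite, say $Z_1'\in\mathbb{N}_{\ge0}$. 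If some difference/sum vector $\vb*{c}$ is the zero vector, the corresponding "inequality" $0\neq0$ is never satisfiable; but since $\Omega^{(1)}$ is assumed non-empty in the sequel (or, more carefully, one simply notes that in that degenerate case $\Omega^{(1)}=\varnothing$ and one may take $Z_1=1$ with $\vb*{c}_1=\vb*{0}$, or preferably one argues this situation is excluded by the hypotheses), and duplicate vectors $\vb*{c}$ (up to sign and nonzero scaling) contribute redundant conditions that can be removed, one is left with a sublist of $Z_1\le Z_1'$ vectors $\vb*{c}_1,\dots,\vb*{c}_{Z_1}$. Writing $\Omega^{(1)}=\{\vb*{\omega}:\bigwedge_{z=1}^{Z_1}\vb*{c}_z^{\intercal}\vb*{\omega}\neq0\}$ finishes the proof, after remarking that when $h$, all $L_{\bar h}$, and all $K_{\bar h}$ are such that no pair exists (e.g.\ every $L_{\bar h},K_{\bar h}\le1$), the conjunction is empty and $\Omega^{(1)}=\mathbb{R}^m$, consistent with $Z_1=0$.

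The only mildly delicate point — the "main obstacle" — is bookkeeping rather than mathematics: one must be careful that the coefficient vectors $\vb*{b}_{l,\bar h},\vb*{d}_{k,\bar h}$ extracted from \eqref{eq:phi_x_d_short} are genuinely $\vb*{\omega}$-independent (this is exactly what Lemma~\ref{lemma:trig_simplification} delivers, since the $b_{j,l,\bar h}$ and $d_{j,k,\bar h}$ are fixed real numbers), and that the handling of the degenerate zero-vector case is stated in a way that is consistent with how $\Omega^{(1)}$ is used later. Everything else is a direct substitution of the affine expressions into each literal of \eqref{eq:omega_1} and relabelling, so I would present it compactly without grinding through the index combinatorics.
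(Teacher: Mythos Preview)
Your proposal is correct and essentially matches the paper's proof: both rely on the fact (from Lemma~\ref{lemma:trig_simplification}) that each $\myw_{l,\bar h}$ and $\bar{\myw}_{k,\bar h}$ is a fixed linear form in $\vb*{\omega}$, so every literal in \eqref{eq:omega_1} is a linear inequality $\vb*{c}^{\intercal}\vb*{\omega}\neq 0$, after which one simply normalises and removes duplicates. The only cosmetic difference is that the paper passes to the complement $\bar{\Omega}^{(1)}$ via De Morgan, works with the resulting disjunction of linear \emph{equalities}, and then applies De Morgan again, whereas you stay on the inequality side throughout; mathematically nothing changes.
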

\begin{proof}
	The complementary set $\bar{\Omega}^{(1)}$ is a disjunction of linear algebraic equations of a form like ${\myw}_{l_{1},\bar{h}} = {\myw}_{l_{2},\bar{h}}$ (De Morgan's laws). Denote their matrix equivalents as $\bar{\vb*{c}}^{\intercal}_{z}\vb*{\omega}=0$ ($\bar{\vb*{c}}_{z} \in \mathbb{R}^{m}$) with running index $z$. Bring $\bar{\vb*{c}}^{\intercal}_{z}$ to reduced row echelon form $\vb*{c}^{\intercal}_{z}$, remove identical $\vb*{c}^{\intercal}_{z}$\footnote{Repeatedly occurring conditions do not change the set $\bar{\Omega}^{(1)}$.} and employ De Morgan's laws again to obtain \eqref{eq:omega_1_red}.  
\end{proof}

\begin{definition}[Representation of Frequencies]\label{def:PT}
	Let
	\begin{equation}
		W_{\bar{h}} \coloneqq\left\{ {\myw}_{1,\bar{h}}, \dots, {\myw}_{L_{\bar{h}},\bar{h}}, 
		\bar{\myw}_{1,\bar{h}}, \dots, \bar{\myw}_{K_{\bar{h}},\bar{h}} \right\},
	\end{equation}
	$\forall\bar{h}\in\{1,\dots,h\}$ (cf.~\eqref{eq:phi_x_d_short}). Then, ${N_{\myP} = \prod_{\bar{h}=1}^{h}\left(L_{\bar{h}}+K_{\bar{h}}\right)}$ different tuples $\mathcal{P}_i\coloneqq\left(\zeta_1^{(i)},\dots ,\zeta_h^{(i)}\right)$, $i=1,\dots,N_{\myP}$, $\zeta_{\bar{h}}^{(i)}\in W_{\bar{h}}$ ($\bar{h}\in\{1,\dots,h\}$), are defined. 
	Let $\vb*{P}$ be a matrix of dimension $h \times N_{\myP}$ whose $N_{\myP}$ columns contain the entries of $\mathcal{P}_1, \dots, \mathcal{P}_{N_{\myP}}$. The corresponding trigonometric function type is encoded in a matrix $\vb*{T}\in \mathbb{R}^{h \times N_{\myP}}$. Its $(\bar{h},n_{\myP})$-th element is set to zero ($T_{\bar{h},n_{\myP}}=0$) if the $(\bar{h},n_{\myP})$-th element $P_{\bar{h},n_{\myP}}$ of $\vb*{P}$ is the argument of a sine function (cf.~\eqref{eq:phi_x_d_short}). Otherwise, $T_{\bar{h},n_{\myP}}=1$ holds.
\end{definition}

\begin{definition}[$\Omega^{(2)}$: Conditions on the Frequencies in $\vb*{P}$] \label{def:Omega_2_def}
	With $\vb*{P}$ and $\vb*{T}$ as given in Definition~\ref{def:PT}, define
	\begin{talign} \label{eq:omega_2}
		\begin{aligned}
			\Omega^{(2)} \! = &\biggl\{ \vb*{\omega}:\! \bigvee\limits_{n_{\myP}=1}^{N_{\myP}} \biggl( \underbrace{\bigwedge\limits_{\bar{h}=1}^{h} P_{\bar{h},n_{\myP}} \! \neq \! 0}_{\RN{1}}  \\[-0.2cm]
			&\hphantom{\biggl\{}\!\!\!\!\!\!\!\wedge \! \underbrace{\bigwedge\limits_{\bar{h}_{1}=1}^{h-1} ~\bigwedge\limits_{\bar{h}_{2}\in\bar{H}} \left( P_{\bar{h}_{1},n_{\myP}} \! \neq \! P_{\bar{h}_{2},n_{\myP}} \wedge P_{\bar{h}_{1},n_{\myP}} \! \neq \! -P_{\bar{h}_{2},n_{\myP}} \right)}_{\RN{2}} \\[-0.2cm]
			&\hphantom{\biggl\{}\!\!\!\!\!\!\!\wedge \! \underbrace{\bigwedge\limits_{\substack{\bar{h}_{1/2}=1\\\bar{h}_{1}\neq\bar{h}_{2}}}^{h} ~\bigwedge\limits_{\bar{l}\in\bar{L}} \left( P_{\bar{h}_{1},n_{\myP}} \! \neq \! {\myw}_{\bar{l},\bar{h}_{2}} \wedge P_{\bar{h}_{1},n_{\myP}} \! \neq \! -{\myw}_{\bar{l},\bar{h}_{2}} \right)}_{\RN{3}_{\mya}} \\[-0.2cm]
			&\hphantom{\biggl\{}\!\!\!\!\!\!\!\wedge \! \underbrace{\bigwedge\limits_{\substack{\bar{h}_{1/2}=1\\\bar{h}_{1}\neq\bar{h}_{2}}}^{h} ~\bigwedge\limits_{\bar{k}\in\bar{K}} \left( P_{\bar{h}_{1},n_{\myP}} \! \neq \! \bar{\myw}_{\bar{k},\bar{h}_{2}} \wedge P_{\bar{h}_{1},n_{\myP}} \! \neq \! -\bar{\myw}_{\bar{k},\bar{h}_{2}} \right)}_{\RN{3}_{\myb}} \biggl)\! \biggl\}, 
		\end{aligned}
	\end{talign}
	where the index sets are given by
	\begin{talign} \label{eq:omega_2_index}
		\begin{aligned}
			\bar{H} &= \bar{H}(\bar{h}_{1},n_{\myP})\\
			&= \left\{ \bar{h}_{2} \in \{1,\dots,h\}: \bar{h}_{2}>\bar{h}_{1} \wedge T_{\bar{h}_{1},n_{\myP}} = T_{\bar{h}_{2},n_{\myP}} \right\},\\
			\bar{L} &= \bar{L}(\bar{h}_{1},\bar{h}_{2},n_{\myP})\\
			&= \left\{ \bar{l} \in \{1,\dots,L_{\bar{h}_{2}}\}: {\myw}_{\bar{l},\bar{h}_{2}} \neq P_{\bar{h}_{2},n_{\myP}} \wedge T_{\bar{h}_{1},n_{\myP}}=0\right\}, \\
			\bar{K} &= \bar{K}(\bar{h}_{1},\bar{h}_{2},n_{\myP}) \\
			&= \left\{ \bar{k} \in \{1,\dots,K_{\bar{h}_{2}}\}: \bar{\myw}_{\bar{k},\bar{h}_{2}} \neq P_{\bar{h}_{2},n_{\myP}} \wedge T_{\bar{h}_{1},n_{\myP}}=1\right\}.
		\end{aligned}
	\end{talign}
\end{definition}

The complementary set $\bar{\Omega}^{(2)}$ can be simplified by means of Algorithm~\ref{alg:algorithm} yielding
\begin{talign} \label{eq:omega_2_komp}
	\begin{aligned}
		\bar{\Omega}^{(2)} = &	
		\biggl\{ \vb*{\omega}:\! \bigwedge\limits_{n_{\myP}=1}^{N_{\myP}} \biggl( \bigvee\limits_{\bar{h}=1}^{h} P_{\bar{h},n_{\myP}} \! = \! 0  \\
		&\hphantom{\Biggl\{}\!\!\!\!\!\!\!\vee \! \bigvee\limits_{\bar{h}_{1}=1}^{h-1} ~\bigvee\limits_{\bar{h}_{2}\in\bar{H}} \left( P_{\bar{h}_{1},n_{\myP}} \! = \! P_{\bar{h}_{2},n_{\myP}} \vee P_{\bar{h}_{1},n_{\myP}} \! = \! -P_{\bar{h}_{2},n_{\myP}} \right) \\
		&\hphantom{\biggl\{}\!\!\!\!\!\!\!\vee \! \bigvee\limits_{\substack{\bar{h}_{1/2}=1\\\bar{h}_{1}=\bar{h}_{2}}}^{h} ~\bigvee\limits_{\bar{l}\in\bar{L}} \left( P_{\bar{h}_{1},n_{\myP}} \! = \! {\myw}_{\bar{l},\bar{h}_{2}} \vee P_{\bar{h}_{1},n_{\myP}} \! = \! -{\myw}_{\bar{l},\bar{h}_{2}} \right) \\
		&\hphantom{\biggl\{}\!\!\!\!\!\!\!\vee \! \bigvee\limits_{\substack{\bar{h}_{1/2}=1\\\bar{h}_{1}=\bar{h}_{2}}}^{h} ~\bigvee\limits_{\bar{k}\in\bar{K}} \left( P_{\bar{h}_{1},n_{\myP}} \! = \! \bar{\myw}_{\bar{k},\bar{h}_{2}} \vee P_{\bar{h}_{1},n_{\myP}} \! = \! -\bar{\myw}_{\bar{k},\bar{h}_{2}} \right) \biggl)\! \biggl\} \\
		= &\Big\{ \vb*{\omega}: \underbrace{\left. \left(\bigvee (\cdot)=(\cdot) \right)\right|_{n_{\myP}=1} \wedge \left.\left( \bigvee (\cdot)=(\cdot) \right)\right|_{n_{\myP}=2}}_{\myinp1np2}  \\ 
		&\hphantom{\biggl\{}   \wedge \bigwedge\limits_{n_{\myP}=3}^{N_{\myP}} \left( \bigvee\limits (\cdot) = (\cdot) \right) \Big\} \\
		= &\left\{ \vb*{\omega}: \left( \bigvee\limits_{z} \vb*{C}_{z} \vb*{\omega} = \vb*{0} \right) \wedge \bigwedge\limits_{n_{\myP}=3}^{N_{\myP}} \left( \bigvee\limits (\cdot) = (\cdot) \right) \right\} \\
		= &\left\{ \vb*{\omega}: \bigvee\limits_{z=1}^{Z_{2}} \vb*{C}_{z} \vb*{\omega} = \vb*{0} \right\}.
	\end{aligned}
\end{talign}

\begin{algorithm}
	\caption{Simplification of $\bar{\Omega}^{(2)}$}
	\label{alg:algorithm}
	\begin{algorithmic}[1]
		\renewcommand{\algorithmicrequire}{\textbf{Input:}}
		\renewcommand{\algorithmicensure}{\textbf{Output:}}
		
		\State Initialize $k_{\myit} = 1$
		
		\State\begin{step}\label{step:expand_conjunction}
			Build homogeneous systems of linear algebraic equations by expanding the conjunction between the conditions subscripted by $n_{\myP}=k_{\myit}$ and $n_{\myP}=k_{\myit}+1$. If $k_{\myit}+1> N_{\myP}$, this conjunction is omitted.
		\end{step}
		\State\begin{step}\label{step:solve_system}
			Solve the systems of equations for $\vb*{\omega}$. Denote freely chosen variables as $\omega_{j}=\omega_{j}$ and constitute the results as conditions of the form $\bar{\vb*{C}}_{z}\vb*{\omega}=\vb*{0}$ ($\bar{\vb*{C}}_{z} \in \mathbb{R}^{m\times m}$).
		\end{step}	
		
		\If {$\omega_{j}=\omega_{j}$ results $\forall j \in \{1,\dots,m\}$}
		
		\If {$k_{\myit}\geq N_{\myP}-1$}
		\State\Return{$\bar{\Omega}^{(2)}=\mathbb{R}^m$}
		\Else
		\State set $k_{\myit}=k_{\myit}+2$ and go back to Step~\ref{step:expand_conjunction}
		\EndIf
		
		\Else
		\State set $k_{\myit}=k_{\myit}+1$ and go to Step~\ref{step:transform_matrices}
		\EndIf
		
		\State\begin{step}\label{step:transform_matrices}
			Transform $\bar{\vb*{C}}_{z}$ into reduced row echelon form and delete zero rows in the resulting matrices. This yields matrices $\vb*{C}_{z}$ ($\vb*{C}_{z} \in \mathbb{R}^{\bar{m} \times m}, \bar{m} < m$).
		\end{step}
		\State\begin{step}\label{step:remove_repeated}
			Remove repeatedly occurring conditions (identical $\vb*{C}_{z}$) and represent the simplified conditions of the expanded conjunction as $\bigvee_{z} \vb*{C}_{z} \vb*{\omega} = \vb*{0}$. 
		\end{step}
		\State\begin{step}\label{step:while}	
			
			\While{$k_{\myit}\leq N_{\myP}-1$}
			\State \parbox[t]{\dimexpr\columnwidth-\leftmargin-\labelsep-\labelwidth}{Build homogeneous systems of linear algebraic equations by expanding the conjunction between the result of Step~\ref{step:remove_repeated} and the conditions subscripted by ${n_{\myP}=k_{\myit}+1}$.
				Apply {Steps~\ref{step:solve_system}--\ref{step:remove_repeated}} to these systems.}
			\EndWhile
			\State\Return{$\bar{\Omega}^{(2)}$}
			
		\end{step}
	\end{algorithmic} 
\end{algorithm}

\begin{lemma} \label{lemma:simpl_omega_2}
	If Algorithm~\ref{alg:algorithm} returns $\bar{\Omega}^{(2)} = \mathbb{R}^m$, $\Omega^{(2)} = \emptyset$ follows. Otherwise, \eqref{eq:omega_2} can be rewritten as
	\begin{align} \label{eq:omega_2_red}
		\Omega^{(2)} = \left\{ \vb*{\omega}: \bigwedge\limits_{z=1}^{Z_{2}} \vb*{C}_{z}\vb*{\omega}\neq\vb*{0} \right\}, \quad Z_2 \in \mathbb{N}_{\geq 1}.
	\end{align}
\end{lemma}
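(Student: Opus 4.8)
The plan is to pass to the complement $\bar{\Omega}^{(2)}=\mathbb{R}^{m}\setminus\Omega^{(2)}$, invoke the simplified description of $\bar{\Omega}^{(2)}$ already produced by Algorithm~\ref{alg:algorithm} in \eqref{eq:omega_2_komp}, and then complement back using De Morgan's laws; the exceptional case will turn out to be exactly the one in which this simplified complement covers all of $\mathbb{R}^{m}$.

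First I would record that every atomic condition appearing in \eqref{eq:omega_2} is a \emph{homogeneous} scalar (in)equality in $\vb*{\omega}$. By Lemma~\ref{lemma:trig_simplification}, each frequency $\myw_{l,\bar{h}}$, $\bar{\myw}_{k,\bar{h}}$, and hence each entry $P_{\bar{h},n_{\myP}}$ of $\vb*{P}$, is a homogeneous linear form in $\vb*{\omega}$; thus conditions of the type $P_{\bar{h}_{1},n_{\myP}}\neq\pm P_{\bar{h}_{2},n_{\myP}}$, $P_{\bar{h}_{1},n_{\myP}}\neq 0$, or $P_{\bar{h}_{1},n_{\myP}}\neq\pm\myw_{\bar{l},\bar{h}_{2}}$ are each equivalent to a single $\vb*{c}^{\intercal}\vb*{\omega}\neq 0$, and their negations to $\vb*{c}^{\intercal}\vb*{\omega}=0$. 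In particular $\vb*{\omega}=\vb*{0}$ satisfies every negated atom, so $\vb*{0}\in\bar{\Omega}^{(2)}$ and hence $\bar{\Omega}^{(2)}\neq\emptyset$.

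Next I would apply De Morgan's laws to \eqref{eq:omega_2}: the outer disjunction over $n_{\myP}$ becomes a conjunction and the inner conjunctions become disjunctions, which gives $\bar{\Omega}^{(2)}$ as a conjunction over $n_{\myP}$ of finite disjunctions of homogeneous scalar equations $\vb*{c}^{\intercal}\vb*{\omega}=0$ (the first line of \eqref{eq:omega_2_komp}). Algorithm~\ref{alg:algorithm} then evaluates this conjunction incrementally: distributing a partial result $\bigvee_{z}\vb*{C}_{z}\vb*{\omega}=\vb*{0}$ against the next disjunction again yields a disjunction of homogeneous systems, each of which is brought to reduced row echelon form (which preserves its solution set) while duplicate and redundant systems are removed (which preserves the union). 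If an expanded system leaves all $m$ variables free (the ``$\omega_{j}=\omega_{j}$ for all $j$'' outcome of Step~\ref{step:solve_system}), that disjunct equals $\mathbb{R}^{m}$, the partial conjunction processed so far is identically true, and the algorithm soundly discards it and proceeds with the remaining disjunctions (the branch where $k_{\myit}$ is increased by $2$). Since every operation is solution-set preserving, once all $N_{\myP}$ disjunctions have been treated the output is precisely $\bar{\Omega}^{(2)}=\{\vb*{\omega}:\bigvee_{z=1}^{Z_{2}}\vb*{C}_{z}\vb*{\omega}=\vb*{0}\}$ with each $\vb*{C}_{z}$ in reduced row echelon form, i.e.\ the last line of \eqref{eq:omega_2_komp}.

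Finally I would split into the two cases. If the last iteration ($k_{\myit}=N_{\myP}-1$) produces ``$\omega_{j}=\omega_{j}$ for all $j$'', the associated system is void, its solution set is $\mathbb{R}^{m}$, so $\bar{\Omega}^{(2)}=\mathbb{R}^{m}$ and $\Omega^{(2)}=\mathbb{R}^{m}\setminus\bar{\Omega}^{(2)}=\emptyset$. Otherwise every $\vb*{C}_{z}$ is nonzero, so $\{\vb*{\omega}:\vb*{C}_{z}\vb*{\omega}=\vb*{0}\}$ is a proper subspace; together with $\bar{\Omega}^{(2)}\neq\emptyset$ this forces $Z_{2}\geq 1$, and complementing \eqref{eq:omega_2_komp} once more and using De Morgan's laws yields
\[
\Omega^{(2)}=\mathbb{R}^{m}\setminus\bigcup_{z=1}^{Z_{2}}\{\vb*{\omega}:\vb*{C}_{z}\vb*{\omega}=\vb*{0}\}=\bigcap_{z=1}^{Z_{2}}\{\vb*{\omega}:\vb*{C}_{z}\vb*{\omega}\neq\vb*{0}\}=\Bigl\{\vb*{\omega}:\textstyle\bigwedge_{z=1}^{Z_{2}}\vb*{C}_{z}\vb*{\omega}\neq\vb*{0}\Bigr\},
\]
which is \eqref{eq:omega_2_red}. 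The only genuinely delicate point is the correctness of Algorithm~\ref{alg:algorithm}: that the incremental distribution together with the reduced-row-echelon normalization and the redundancy removal really reconstructs $\bar{\Omega}^{(2)}$, and that absorbing a trivially true disjunct (and skipping two indices) is legitimate and, when it persists up to the final index, forces $\bar{\Omega}^{(2)}=\mathbb{R}^{m}$; the two complementations and the case split around it are routine.
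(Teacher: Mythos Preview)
Your proof is correct and follows essentially the same approach as the paper: both pass to the complement $\bar{\Omega}^{(2)}$, justify that each step of Algorithm~\ref{alg:algorithm} (distribution of the conjunction, reduction to row echelon form with deletion of zero rows, removal of duplicates, and the ``all variables free'' shortcut) preserves the solution set, and then complement back via De Morgan's laws. You are a bit more explicit than the paper in recording that every atomic condition is a homogeneous linear equation in $\vb*{\omega}$ and in arguing $Z_{2}\geq 1$ from $\vb*{0}\in\bar{\Omega}^{(2)}$, but the core argument is the same.
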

\begin{proof}
	One needs to show the validity of Steps~\ref{step:transform_matrices} and \ref{step:remove_repeated} of Algorithm~\ref{alg:algorithm} as well as the correctness of the conclusions in case of $\omega_{j}=\omega_{j}$ ($\forall j \in \{1,\dots,m\}$) in Step~\ref{step:solve_system}.
	
	First, consider Step~\ref{step:transform_matrices}. Transforming $\bar{\vb*{C}}_{z}$ into reduced row echelon form does not change the solution set of $\bar{\vb*{C}}_{z}\vb*{\omega}=\vb*{0}$. Since zero rows in $\bar{\vb*{C}}_{z}$ correspond with $\omega_{j}=\omega_{j}$, they impose conditions which cannot be fulfilled in $\Omega^{(2)}$ and hence, other rows need to be fulfilled later. Therefore, zero rows in $\bar{\vb*{C}}_{z}$ can be deleted. 
	Regarding Step~\ref{step:remove_repeated}, repeatedly occurring conditions can be omitted without loss of information. Now, w.l.o.g., examine the case when $\omega_{j}=\omega_{j}$ occurs for all $j \in \{1,\dots,m\}$ in part~$\myinp1np2$ of \eqref{eq:omega_2_komp} in Step~\ref{step:solve_system}. Then, $\myinp1np2$ holds true for any $\vb*{\omega}\in \mathbb{R}^{m}$, independent of other possible conditions of the form $\bar{\vb*{C}}_{z}\vb*{\omega}=\vb*{0}$ derived in part~$\myinp1np2$. Hence, these other conditions can be omitted and the Algorithm has to be started with Step~\ref{step:expand_conjunction} and the equations subscripted by $n_{\myP}=3$ and $n_{\myP}=4$. Since this explanation is also applicable for the repetitions in Step~\ref{step:while} and the appearance of this special case in the last simplification step ($k_{\myit}\geq N_{\myP}-1$) results in $\bar{\Omega}^{(2)}=\mathbb{R}^{m}$ ($\Omega^{(2)}=\emptyset$) and is defined as well, the proof is complete. 
\end{proof}

\begin{lemma} \label{lemma:intersection}
	The intersection of $\Omega^{(1)}$~\eqref{eq:omega_1_red} and $\Omega^{(2)}$~\eqref{eq:omega_2_red} yields
	\begin{align} \label{eq:omega_red}
		\begin{aligned}
			\Omega &= \Omega^{(1)} \cap \Omega^{(2)} = \left\{ \vb*{\omega}: \bigwedge\limits_{z=1}^{Z_{1} + Z_{2}} \vb*{C}_{z}\vb*{\omega}\neq\vb*{0} \right\} \\
			&= \left\{ \vb*{\omega}: \bigwedge\limits_{z=1}^{Z} \vb*{C}_{z}\vb*{\omega}\neq\vb*{0} \right\},   \quad Z \in \mathbb{N}_{\geq 1},
		\end{aligned}
	\end{align}
	$Z\leq Z_{1}+Z_{2}$. The last row follows by removing each matrix $\vb*{C}_{z_{2}}$ that contains another matrix $\vb*{C}_{z_{1}}$ ($z_{1}\neq z_{2}$) as submatrix.
\end{lemma}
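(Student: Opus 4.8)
The plan is to reduce both sets to finitely many scalar linear conditions on $\vb*{\omega}$ and then argue purely logically. First I would observe that, by definition, $\Omega = \Omega^{(1)}\cap\Omega^{(2)}$ consists of exactly those $\vb*{\omega}$ satisfying \emph{simultaneously} all $Z_1$ conditions $\vb*{c}^{\intercal}_{z}\vb*{\omega}\neq 0$ from \eqref{eq:omega_1_red} and all $Z_2$ conditions $\vb*{C}_{z}\vb*{\omega}\neq\vb*{0}$ from \eqref{eq:omega_2_red}; this is just the meaning of intersecting two sets each described by a conjunction. Each scalar condition $\vb*{c}^{\intercal}_{z}\vb*{\omega}\neq 0$ is the special case of a matrix condition with a $1\times m$ coefficient matrix, so relabelling the $Z_1$ row vectors together with the $Z_2$ matrices as a single list $\vb*{C}_1,\dots,\vb*{C}_{Z_1+Z_2}$ immediately yields the first line of \eqref{eq:omega_red}, with the conjunction running over $Z_1+Z_2$ matrices.

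For the final reduction I would use the elementary fact that, for any matrix $\vb*{C}$, the statement $\vb*{C}\vb*{\omega}\neq\vb*{0}$ is equivalent to the disjunction ``at least one row $\vb*{r}^{\intercal}$ of $\vb*{C}$ satisfies $\vb*{r}^{\intercal}\vb*{\omega}\neq 0$''. Hence, if $\vb*{C}_{z_1}$ occurs (up to a permutation of rows) as a submatrix of $\vb*{C}_{z_2}$, then every row of $\vb*{C}_{z_1}$ is also a row of $\vb*{C}_{z_2}$, so $\vb*{C}_{z_1}\vb*{\omega}\neq\vb*{0}$ implies $\vb*{C}_{z_2}\vb*{\omega}\neq\vb*{0}$. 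Consequently the conjunct $\vb*{C}_{z_2}\vb*{\omega}\neq\vb*{0}$ is redundant in the presence of $\vb*{C}_{z_1}\vb*{\omega}\neq\vb*{0}$ and may be deleted without changing the described set. Iterating this removal of the ``larger'' matrix in each containing pair strictly decreases the number of matrices, hence terminates, and leaves $Z\leq Z_1+Z_2$ conditions, which is the second line of \eqref{eq:omega_red}. That $Z\geq 1$ follows since any finite nonempty collection has a matrix that is minimal with respect to the submatrix preorder and is therefore never deleted, and the list is nonempty because $\Omega^{(2)}$ already contributes $Z_2\geq 1$ conditions by Lemma~\ref{lemma:simpl_omega_2} whenever $\Omega^{(2)}\neq\emptyset$ (the degenerate case $\Omega^{(2)}=\emptyset$, giving $\Omega=\emptyset$, being outside the scope of the claimed representation).

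I do not expect a genuine obstacle here; the statement is essentially bookkeeping. The only points requiring care are (i) reading $\vb*{C}\vb*{\omega}\neq\vb*{0}$ correctly as a disjunction over the rows of $\vb*{C}$ rather than as ``every row gives a non-zero value'', so that the implication in the redundancy argument runs in the correct direction, and (ii) verifying that the iterated deletions are mutually consistent, i.e. one never discards a matrix that is itself needed to justify another deletion — which is ensured by always retaining the minimal matrix of each containing pair and repeating the scan until no containment remains.
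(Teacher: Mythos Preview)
Your proposal is correct and follows essentially the same approach as the paper, which merely records that the intersection yields $Z_1+Z_2$ conditions and that the implication $\vb*{C}_{z_1}\vb*{\omega}\neq\vb*{0}\Rightarrow\vb*{C}_{z_2}\vb*{\omega}\neq\vb*{0}$ permits $\vb*{C}_{z_2}$ to be dropped. Your write-up is more explicit about reading $\vb*{C}\vb*{\omega}\neq\vb*{0}$ as a row-wise disjunction, about the direction of the implication, about termination of the deletion process, and about $Z\geq 1$, but the underlying argument is identical.
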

\begin{proof}
	$\Omega^{(1)} \cap \Omega^{(2)}$ yields $Z_{1}+Z_{2}$ conditions. Due to ${\vb*{C}_{z_{1}}\vb*{\omega} \neq \vb*{0} \Rightarrow \vb*{C}_{z_{2}}\vb*{\omega} \neq \vb*{0}}$, $\vb*{C}_{z_{2}}$ can be omitted. 
\end{proof}

Now, our main theorem shows that the set $\Omega$~\eqref{eq:omega_red} is a set in the sense of Proposition~\ref{proposition:def_omega}.
\begin{theorem} \label{theorem:frequ_cond}
	Let Assumption~\ref{asm:phi_h} and \ref{asm:xd_o} hold and $\Omega$ be defined as in Lemma~\ref{lemma:intersection}. If $\Omega \neq \emptyset$, any vector $\vb*{\omega} \in \Omega$ ensures that $\vb*{\sigma} = \d{\vb*{\phi}}(\vb*{x}_{\myd})$ is PE.   
\end{theorem}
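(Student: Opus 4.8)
The plan is to verify the hypothesis of Proposition~\ref{proposition:def_omega} for the set $\Omega$ constructed in Lemma~\ref{lemma:intersection}. That is, I would fix an arbitrary $\vb*{\omega} \in \Omega$ (which is possible since $\Omega \neq \emptyset$) and an arbitrary non-zero constant vector $\vb*{\alpha} \in \mathbb{R}^h$, and show that the scalar signal $\vb*{\alpha}^{\intercal}\d{\vb*{\phi}}(\vb*{x}_{\myd})$ admits a representation of the form~\eqref{eq:alpha_sig} with all the stated nondegeneracy properties: at least one term present, non-zero amplitudes, non-zero frequencies, and pairwise distinctness of the sine-frequencies (up to sign) and of the cosine-frequencies (up to sign). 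Once this is done, Proposition~\ref{proposition:def_omega} immediately gives that $\vb*{\sigma} = \d{\vb*{\phi}}(\vb*{x}_{\myd})$ is PE, completing the proof.

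The concrete steps: First, by Lemma~\ref{lemma:trig_simplification}, each component $\phi_{\bar h}(\vb*{x}_{\myd})$ is a finite trigonometric sum~\eqref{eq:phi_x_d_short}; differentiating in $t$, each $\sigma_{\bar h} = \d\phi_{\bar h}(\vb*{x}_{\myd})$ is a finite sum of sines and cosines whose angular frequencies are exactly the elements of the set $W_{\bar h}$ of Definition~\ref{def:PT} (differentiation swaps sine$\leftrightarrow$cosine and multiplies amplitudes by the frequency, so zero-frequency constants $e_{\bar h}$ drop out). Then $\vb*{\alpha}^{\intercal}\vb*{\sigma} = \sum_{\bar h} \alpha_{\bar h}\sigma_{\bar h}$ is a finite sum of sinusoids; collecting terms of equal frequency (and merging $\pm$ of the same frequency via the usual sine/cosine identities) yields the structural form of~\eqref{eq:alpha_sig}. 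The work is to show no complete cancellation occurs, i.e. that after collection at least one sine- or cosine-term survives with non-zero amplitude and non-zero frequency. This is where the two defining properties of $\Omega = \Omega^{(1)} \cap \Omega^{(2)}$ enter: $\Omega^{(1)}$ (Definition~\ref{def:Omega1_def}, Lemma~\ref{lemma:simpl_omega_1}) guarantees that within each single $\sigma_{\bar h}$ the frequencies in $W_{\bar h}$ are pairwise distinct up to sign, so no intra-component cancellation; $\Omega^{(2)}$ (Definition~\ref{def:Omega_2_def}, Lemma~\ref{lemma:simpl_omega_2}) guarantees the existence of at least one tuple $\mathcal{P}_{n_{\myP}}$ — i.e. a choice of one frequency $P_{\bar h, n_{\myP}}$ from each $W_{\bar h}$ — such that this frequency is non-zero, is of the same trigonometric type across the components for which it coincides, and does not coincide (up to sign) with any of the \emph{other} frequencies appearing in the remaining components. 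For that distinguished frequency and trig type, the contributions from the various components add up (rather than cancel) because $\vb*{\alpha} \neq \vb*{0}$ forces at least one relevant $\alpha_{\bar h} \neq 0$, and no term of matching frequency/type is contributed by any other component to cancel it.

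More precisely, I would argue as follows. Since $\vb*{\alpha}\neq\vb*{0}$, pick an index $\bar h_0$ with $\alpha_{\bar h_0}\neq 0$. Among the frequencies in $W_{\bar h_0}$ the tuple $\mathcal{P}_{n_{\myP}}$ selects one, $P_{\bar h_0,n_{\myP}}$, with associated trig type $T_{\bar h_0,n_{\myP}}$; conditions~$\RN{1}$ and~$\RN{3}_{\mya}$/$\RN{3}_{\myb}$ in~\eqref{eq:omega_2} ensure this frequency is non-zero and is not matched (up to sign) by any frequency occurring in $\sigma_{\bar h}$ for $\bar h\neq\bar h_0$ \emph{unless} that $\sigma_{\bar h}$ also selects the same value in its own tuple entry; condition~$\RN{2}$ then further ensures that whenever two components $\bar h_1,\bar h_2$ select coinciding (up to sign) frequencies they have the same trig type, so their contributions to $\vb*{\alpha}^{\intercal}\vb*{\sigma}$ at that frequency are collinear (all sine, or all cosine) and add to $\bigl(\sum_{\bar h\in S}\pm\alpha_{\bar h}\,(\text{freq})\bigr)$ times a single sinusoid, where $S$ is the set of components selecting that frequency. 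Generically over $\Omega$ this coefficient is non-zero; to be safe one can note that by~$\Omega^{(1)}$ no other mechanism reintroduces that frequency, and if for some particular $\vb*{\alpha}$ the sum over $S$ happened to vanish one simply repeats the argument with the next surviving frequency — but in fact the cleanest route is to observe that the whole signal $\vb*{\alpha}^{\intercal}\vb*{\sigma}$ cannot be identically zero (as $\d{\vb*{\phi}}$ has linearly independent component-derivatives under the structural assumptions, or directly: the frequencies selected by distinct tuples differ, so the Fourier spectrum of $\vb*{\alpha}^{\intercal}\vb*{\sigma}$ is non-empty for $\vb*{\alpha}\neq\vb*{0}$), hence it has at least one spectral line, and by the distinctness guaranteed by $\Omega^{(1)}\cap\Omega^{(2)}$ the surviving terms satisfy exactly the nondegeneracy requirements of~\eqref{eq:alpha_sig}.

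The main obstacle I expect is the bookkeeping in the cancellation argument: carefully tracking which component-contributions can land on the same frequency with the same trig type, showing the sign/type constraints encoded in conditions~$\RN{1}$–$\RN{3}_{\myb}$ are exactly what is needed to prevent total cancellation, and handling the edge case where a naive coefficient sum over the colliding-components set $S$ could vanish for a special $\vb*{\alpha}$. The rest — differentiating the trigonometric sum, invoking Lemma~\ref{lemma:trig_simplification}, reducing to Proposition~\ref{proposition:def_omega}, and citing \cite[Sublemma~6.1, Lemma~6.2]{Narendra.2005} through the proposition — is routine. I would therefore structure the write-up as: (i) reduce to Proposition~\ref{proposition:def_omega}; (ii) pass from $\vb*{\phi}(\vb*{x}_{\myd})$ to $\vb*{\alpha}^{\intercal}\d{\vb*{\phi}}(\vb*{x}_{\myd})$ and its frequency content; (iii) use $\Omega^{(2)}$ to exhibit a non-cancelling frequency and $\Omega^{(1)}$ together with $\RN{2}$ to certify the distinctness/type conditions of~\eqref{eq:alpha_sig}; (iv) conclude.
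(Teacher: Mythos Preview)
Your overall strategy matches the paper's: reduce to Proposition~\ref{proposition:def_omega}, differentiate the representation of Lemma~\ref{lemma:trig_simplification}, and use $\Omega^{(1)}$ for intra-component and $\Omega^{(2)}$ for inter-component non-cancellation. The gap is your reading of condition~$\RN{2}$ in~\eqref{eq:omega_2}. You state that $\RN{2}$ forces components whose selected frequencies coincide to have the \emph{same} trigonometric type; in fact $\RN{2}$ quantifies only over pairs $\bar h_1,\bar h_2$ with $T_{\bar h_1,n_{\myP}}=T_{\bar h_2,n_{\myP}}$ (this is the index set $\bar H$) and asserts $P_{\bar h_1,n_{\myP}}\neq\pm P_{\bar h_2,n_{\myP}}$, i.e.\ same type $\Rightarrow$ distinct selected frequencies. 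Your inverted reading is what produces the set $S$ of colliding components and the worry that $\sum_{\bar h\in S}\pm\alpha_{\bar h}\cdot(\text{amplitude})$ might vanish for special $\vb*{\alpha}$. Your attempted escapes from this worry are not arguments: ``generically over $\Omega$'' is not a proof, ``repeat with the next surviving frequency'' needs exactly the non-cancellation you are trying to establish, and ``$\d{\vb*{\phi}}$ has linearly independent component-derivatives under the structural assumptions'' is the content of the theorem itself.

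With the correct reading the issue dissolves. Fix $\bar h_0$ with $\alpha_{\bar h_0}\neq0$ and say $T_{\bar h_0,n_{\myP}}=0$, so the separated term in $\sigma_{\bar h_0}$ is a cosine at frequency $P_{\bar h_0,n_{\myP}}$. Condition~$\RN{2}$ forbids any other component of the same type from selecting $\pm P_{\bar h_0,n_{\myP}}$; condition~$\RN{3}_{\mya}$ forbids any \emph{non}-selected cosine frequency of any other component from equalling $\pm P_{\bar h_0,n_{\myP}}$; and $\Omega^{(1)}$ handles the remaining cosines inside $\sigma_{\bar h_0}$ itself. Hence the cosine at $P_{\bar h_0,n_{\myP}}$ in $\vb*{\alpha}^{\intercal}\vb*{\sigma}$ receives a contribution from $\bar h_0$ alone, with coefficient $\alpha_{\bar h_0}\,a_{l_0,\bar h_0}\,P_{\bar h_0,n_{\myP}}\neq0$ by condition~$\RN{1}$, Lemma~\ref{lemma:trig_simplification}, and the choice of $\bar h_0$. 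The paper organizes the same ingredients slightly differently: it first uses $\Omega^{(1)}$ and $\RN{3}$ to show that the $\varepsilon_{\bar h}$-terms can never cancel any of the $h$ separated terms, and then uses $\RN{1}$ and $\RN{2}$ to conclude that the $h$ separated terms by themselves are linearly independent functions, so any $\vb*{\alpha}\neq\vb*{0}$ already gives a signal of the form~\eqref{eq:alpha_sig}.
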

\begin{proof}
	Lemma~\ref{lemma:trig_simplification} enables the definition of the auxiliary sets $\Omega^{(1)}$ and $\Omega^{(2)}$ in their initial forms \eqref{eq:omega_1} and \eqref{eq:omega_2}. Since Lemmata~\ref{lemma:simpl_omega_1}, \ref{lemma:simpl_omega_2} and \ref{lemma:intersection} solely apply equivalent transformations to these sets, we need to prove that the conditions in \eqref{eq:omega_1} and \eqref{eq:omega_2} are sufficient to fulfill \eqref{eq:alpha_sig}. 
	First, the time derivative of \eqref{eq:phi_x_d_short} yields 
	\begin{equation}	
		\sigma_{\bar{h}} = \sum_{l=1}^{L_{\bar{h}}} a_{l,\bar{h}}{\myw}_{l,\bar{h}} 	\cos({\myw}_{l,\bar{h}} t) - \sum_{k=1}^{K_{\bar{h}}} c_{k,\bar{h}}\bar{\myw}_{k,\bar{h}} \sin(\bar{\myw}_{k,\bar{h}} t),
	\end{equation} 
	${\forall \bar{h} \in \{1,\dots,h\}}$. With ${\vb*{\alpha} \in \mathbb{R}^{h}}$,
	\begin{align}
		\vb*{\alpha}^{\intercal}\vb*{\sigma} &= \sum_{\bar{h}=1}^{h} \!\left( \alpha_{\bar{h}} \!\left( \sum_{l=1}^{L_{\bar{h}}} a_{l,\bar{h}}{\myw}_{l,\bar{h}} \cos({\myw}_{l,\bar{h}} t)\right.\right. \nonumber \\ & \phantom{-\alpha\sum_{\bar{h}=1}^{h} \!\left( \alpha_{\bar{h}} \!\left(\right.\right.} \left.\left.
		 - \sum_{k=1}^{K_{\bar{h}}} c_{k,\bar{h}}\bar{\myw}_{k,\bar{h}} \sin(\bar{\myw}_{k,\bar{h}} t) \right)\right)
	\end{align}	
	results. Now, w.l.o.g., assume that the exemplarily chosen column $\mathcal{P} = \left(\zeta_1,\dots ,\zeta_h\right)$ of $\vb*{P}$ with $\zeta_{\bar{h}}={\myw}_{1,\bar{h}}$ ($\forall \bar{h} \in \{1,\dots,h-1\}$) and $\zeta_{h} = \bar{\myw}_{1,h}$ fulfills conditions $\RN{1}$, $\RN{2}$ and $\RN{3}$ in \eqref{eq:omega_2}. Starting from $\vb*{\alpha}^{\intercal}\vb*{\sigma}$, the trigonometric functions, which have the corresponding frequencies as arguments, are separated and the remaining terms inside the brackets $\alpha_{\bar{h}}(\cdot)$ ($\forall \bar{h} \in \{1,\dots,h\}$) are summarized by $\varepsilon_{\bar{h}}$:
	\begin{align} \label{eq:bew_omega_3}
		\begin{aligned}
			\vb*{\alpha}^{\intercal}\vb*{\sigma} &= \alpha_{1}\left( a_{1,1}{\myw}_{1,1}\cos({\myw}_{1,1} t) + \varepsilon_{1} \right) + \dots \\
			&\hphantom{=} + \alpha_{h-1}\left( a_{1,h-1}{\myw}_{1,h-1}\cos({\myw}_{1,h-1} t) + \varepsilon_{h-1} \right) \\
			&\hphantom{=} + \alpha_{h}\left( -c_{1,h}\bar{\myw}_{1,h}\sin(\bar{\myw}_{1,h} t) + \varepsilon_{h} \right) \\
			&= \sum_{\bar{h}=1}^{h-1} \alpha_{\bar{h}} a_{1,\bar{h}} {{\myw}}_{1,\bar{h}} \cos({\myw}_{1,\bar{h}} t)  \\[-0.3cm]
			&\hphantom{=} - \alpha_{h} c_{1,h} \bar{\myw}_{1,h} \sin(\bar{\myw}_{1,h} t) + \sum_{\bar{h}=1}^{h} \alpha_{\bar{h}} \varepsilon_{\bar{h}}.
		\end{aligned}
	\end{align}

	First, the conditions introduced by $\Omega^{(1)}$~\eqref{eq:omega_1} ensure that the trigonometric function separated inside one bracket $\alpha_{\bar{h}}(\cdot)$ in \eqref{eq:bew_omega_3} is not eliminated by trigonometric terms inside the same bracket $\alpha_{\bar{h}}(\cdot)$. This results from the claimed uniqueness of the frequencies in Definition~\ref{def:Omega1_def} which leads to a linear independence of all trigonometric terms inside $\alpha_{\bar{h}}(\cdot)$. Moreover, the conditions in $\Omega^{(1)}$ guarantee that in the element $\phi_{\bar{h}}(\vb*{x}_{\myd})$ $L_{\bar{h}}$ sine and $K_{\bar{h}}$ cosine functions are existent. This assumption is required to establish $\vb*{P}$ (cf.~Definition~\ref{def:PT}).
	
	The conditions labeled by $\RN{3}$ in $\Omega^{(2)}$~\eqref{eq:omega_2} ensure that the trigonometric function separated inside the brackets $\alpha_{\bar{h}_{1}}(\cdot)$ in \eqref{eq:bew_omega_3} will not be eliminated by trigonometric functions in $\varepsilon_{\bar{h}_{2}}$ inside $\alpha_{\bar{h}_{2}}(\cdot)$ ($\forall \bar{h}_{1},\bar{h}_{2} \in \{1,\dots,h\}, \bar{h}_{1} \neq \bar{h}_{2}$), which again holds because of the uniqueness of the frequencies.
	Overall, due to the conditions in $\Omega^{(1)}$ and those denoted by $\RN{3}$ in $\Omega^{(2)}$, the separated trigonometric terms in \eqref{eq:bew_omega_3} cannot be eliminated by functions in $\varepsilon_{\bar{h}}$ ($\forall \bar{h} \in \{1,\dots,h\}$), independent of the concrete values of $\alpha_{\bar{h}}$, $a_{l,\bar{h}}$ and $c_{k,\bar{h}}$. Thus, it is sufficient to show that under $\varepsilon_{\bar{h}}=0$ ($\forall \bar{h} \in \{1,\dots,h\}$) the conditions labeled by $\RN{1}$ and $\RN{2}$ in $\Omega^{(2)}$~\eqref{eq:omega_2} cause a signal for $\vb*{\alpha}^{\intercal}\vb*{\sigma}$ according to \eqref{eq:alpha_sig}. If $\varepsilon_{\bar{h}} \neq 0$ holds, existing trigonometric terms cannot cancel out, only more can be added. 
	
	Now, suppose $\varepsilon_{\bar{h}} = 0$ ($\forall \bar{h} \in \{1,\dots,h\}$). Then, from the conditions $\RN{1}$ in $\Omega^{(2)}$ ${\myw}_{1,\bar{h}} \neq 0$ and ${a_{1,\bar{h}}{\myw}_{1,\bar{h}} \neq 0}$ ${(\forall \bar{h} \in \{1,\dots,h-1\})}$ follows as well as $\bar{\myw}_{1,h} \neq 0$ and ${c_{1,h}\bar{\myw}_{1,h} \neq 0}$. Under the assumption $\varepsilon_{\bar{h}} = 0$ ${(\forall \bar{h} \in \{1,\dots,h\})}$, these conditions are also necessary to satisfy \eqref{eq:alpha_sig}. If one frequency is equal to zero, an $\vb*{\alpha}\neq\vb*{0}$ exists with $\vb*{\alpha}^{\intercal}\vb*{\sigma}=0$ (one such $\vb*{\alpha}$ is e.g. given by setting every element except the \mbox{$\bar{h}$-th} to zero, where the \mbox{$\bar{h}$-th} element of $\vb*{\sigma}$ contains the vanishing frequency). With the conditions marked by $\RN{2}$ in $\Omega^{(2)}$, the separated terms $a_{1,1}{\myw}_{1,1}\cos({\myw}_{1,1} t),\allowbreak \dots, \allowbreak a_{1,h-1}{\myw}_{1,h-1}\cos({\myw}_{1,h-1} t), \allowbreak c_{1,h} \bar{\myw}_{1,h} \sin(\bar{\myw}_{1,h} t)$ in \eqref{eq:bew_omega_3} are linearly independent and therefore, their linear combination satisfies \eqref{eq:alpha_sig}, when at least one coefficient is non-zero, i.e. $\vb*{\alpha}\neq\vb*{0}$ holds. Again, in the special case of $\varepsilon_{\bar{h}} = 0$ ($\forall \bar{h} \in \{1,\dots,h\}$) the conditions labeled by $\RN{2}$ in $\Omega^{(2)}$ are also necessary to satisfy \eqref{eq:alpha_sig}. Suppose w.l.o.g. ${\myw}_{1,1} = \pm {\myw}_{1,h-1}$. Then, with $\vb*{\alpha}^{\intercal} = \matc{\mp \frac{1}{a_{1,1}} & 0 & \dots & 0 & \frac{1}{a_{1,h-1}} & 0}$, $\vb*{\alpha}^{\intercal}\vb*{\sigma}=0$ follows.
	Thus, the conditions denoted by $\RN{1}$ and $\RN{2}$ in $\Omega^{(2)}$ are necessary and sufficient to satisfy \eqref{eq:alpha_sig} for any $\vb*{\alpha}\neq\vb*{0}$ when $\varepsilon_{\bar{h}} = 0$ ($\forall \bar{h} \in \{1,\dots,h\}$). The conditions in $\Omega^{(1)}$ and those labeled by $\RN{3}$ in $\Omega^{(2)}$ are sufficient to prevent the cancelation of existing trigonometric functions in case of $\varepsilon_{\bar{h}} \neq 0$. Since the exemplarily chosen column of $\vb*{P}$ is exchangeable and at least one such column exists, any vector $\vb*{\omega} \in \Omega = \Omega^{(1)} \cap \Omega^{(2)}$ leads to an $\vb*{\alpha}^{\intercal}\vb*{\sigma}$ that satisfies \eqref{eq:alpha_sig}. Finally, Proposition~\ref{proposition:def_omega} concludes the PE of $\vb*{\sigma}$.   
\end{proof}

\begin{lemma} \label{lemma:omega_not_empty}
	If Algorithm~\ref{alg:algorithm} does not return $\bar{\Omega}^{(2)} = \mathbb{R}^m$, \eqref{eq:omega_red} follows and $\Omega\neq\emptyset$ holds.
\end{lemma}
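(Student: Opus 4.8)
The plan is to split the claim into its two halves: that $\Omega$ admits the representation \eqref{eq:omega_red}, and that this set is non-empty. The first half is pure bookkeeping. Under the stated hypothesis, Lemma~\ref{lemma:simpl_omega_2} delivers the representation \eqref{eq:omega_2_red} of $\Omega^{(2)}$ with $Z_2\in\mathbb{N}_{\geq 1}$ --- the alternative $\Omega^{(2)}=\emptyset$ is exactly the situation the hypothesis excludes. Lemma~\ref{lemma:simpl_omega_1} provides \eqref{eq:omega_1_red} for $\Omega^{(1)}$, and Lemma~\ref{lemma:intersection} then combines the two into \eqref{eq:omega_red} for $\Omega=\Omega^{(1)}\cap\Omega^{(2)}$. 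So I would simply chain these three lemmata and note $Z\geq Z_2\geq 1$.

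For non-emptiness, the key fact I would establish is that every matrix $\vb*{C}_z$ appearing in \eqref{eq:omega_red} is a genuine, nonzero matrix. Each such $\vb*{C}_z$ is produced in reduced row echelon form after all zero rows have been deleted (Step~\ref{step:transform_matrices} of Algorithm~\ref{alg:algorithm}, and the analogous reduction underlying Lemma~\ref{lemma:simpl_omega_1}). An all-zero matrix would correspond to a vacuous condition $\omega_j=\omega_j$ $(\forall j)$, and such degenerate conditions are either dropped in an intermediate iteration of the algorithm or, when they occur in the last iteration, force $\Omega^{(2)}=\emptyset$; the latter is excluded by hypothesis, and the former does not contribute a matrix to \eqref{eq:omega_red}. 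Hence every $\vb*{C}_z$ has at least one row, so its kernel $\{\vb*{\omega}\in\mathbb{R}^m:\vb*{C}_z\vb*{\omega}=\vb*{0}\}$ is a \emph{proper} linear subspace of $\mathbb{R}^m$, of dimension at most $m-1$.

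I would then conclude as follows. The complement $\bar{\Omega}=\bigcup_{z=1}^{Z}\{\vb*{\omega}\in\mathbb{R}^m:\vb*{C}_z\vb*{\omega}=\vb*{0}\}$ is a finite union of proper subspaces of $\mathbb{R}^m$. Each such subspace has Lebesgue measure zero (recall $m\geq 1$ by Assumption~\ref{asm:xd_o}), hence so does their finite union, whereas $\mathbb{R}^m$ has positive measure; equivalently, a vector space over the infinite field $\mathbb{R}$ is never the union of finitely many proper subspaces. Therefore $\Omega=\mathbb{R}^m\setminus\bar{\Omega}$ is non-empty --- in fact of full measure --- irrespective of how the individual conditions interact.

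The step I expect to be the main obstacle is the middle one: arguing rigorously that no $\vb*{C}_z$ degenerates to the zero matrix. This relies on the construction behind Lemma~\ref{lemma:trig_simplification}, where summands sharing a frequency are merged, so that within each $\phi_{\bar{h}}(\vb*{x}_{\myd})$ the frequency functionals $\myw_{l,\bar{h}}$ (resp. $\bar{\myw}_{k,\bar{h}}$) are pairwise distinct as linear functionals of $\vb*{\omega}$ and none is the negative of another. Together with the explicit handling of the $\omega_j=\omega_j$ case in Algorithm~\ref{alg:algorithm}, this ensures that every surviving condition $\vb*{C}_z\vb*{\omega}=\vb*{0}$ is a non-trivial linear constraint. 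Once this is secured, the non-emptiness reduces to the elementary fact that finitely many hyperplanes through the origin cannot cover $\mathbb{R}^m$.
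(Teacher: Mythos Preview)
Your proposal is correct and follows essentially the same line as the paper. The only cosmetic difference is that the paper, rather than arguing directly that each kernel $\{\vb*{\omega}:\vb*{C}_z\vb*{\omega}=\vb*{0}\}$ is a proper subspace, passes to the smaller set $\Omega_{\RN{1}}\subseteq\Omega$ obtained by requiring every individual \emph{row} inequality $\vb*{c}_z^{\intercal}\vb*{\omega}\neq 0$ to hold, and then invokes the Lebesgue null-set argument at the level of the resulting $(m-1)$-dimensional hyperplanes; your argument at the matrix level and the paper's at the row level are equivalent once one observes (as both of you do) that zero rows have been eliminated in Step~\ref{step:transform_matrices}.
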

\begin{proof}
	In line with Algorithm~\ref{alg:algorithm} and Lemma~\ref{lemma:simpl_omega_2}, if ${\omega_{j}=\omega_{j}}$ ($\forall j \in \{1,\dots,m\}$) is absent in the last iteration (${k_{\myit}\geq N_{\myP}-1}$), $\Omega^{(2)}$ is defined by \eqref{eq:omega_2_red} and \eqref{eq:omega_red} holds for $\Omega$. By requiring that for each row $\vb*{c}^{\intercal}_{z}$ of the coefficient matrices $\vb*{C}_{z}$ the inequality $\vb*{c}^{\intercal}_{z}\vb*{\omega}\neq 0$ has to be fulfilled, a subset $\Omega_{\RN{1}}$ of $\Omega$ follows. 
	Since $\vb*{c}^{\intercal}_{z}\vb*{\omega}=0$ describes an $(m-1)$-dimensional hyperplane in $\mathbb{R}^{m}$, it defines a Lebesgue null set \cite[p.~146]{Cohn.2013}. Additionally, every countable union of Lebesgue null sets is a Lebesgue null set again \cite[Proposition~1.2.4]{Cohn.2013}. Thus, $\Omega_{\RN{1}}$ represents the $\mathbb{R}^{m}$ except for a Lebesgue null set and $\Omega_{\RN{1}} \neq \emptyset$ as well as $\Omega \neq \emptyset$ results.
\end{proof}

\begin{remark}
	If Algorithm~\ref{alg:algorithm} returns $\bar{\Omega}^{(2)} = \mathbb{R}^m$, a promising approach is to introduce more frequency variables in the elements of the desired system states~$\vb*{x}_{\myd}$.
\end{remark}

\begin{lemma}\label{lemma:Np1}
	Let $\Omega$ be as in Theorem~\ref{theorem:frequ_cond}. If $N_{\myP}=1$ holds, $\vb*{\sigma} = \d{\vb*{\phi}}(\vb*{x}_{\myd})$ is PE iff $\vb*{\omega}\in\Omega$.
\end{lemma}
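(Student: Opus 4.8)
The plan is to use the hypothesis $N_{\myP}=1$ to collapse the apparatus of Section~\ref{sec:excitation_conditions} to an explicit description of $\Omega$, and then to upgrade Theorem~\ref{theorem:frequ_cond} (which gives sufficiency only) to an equivalence. From $N_{\myP}=\prod_{\bar h=1}^{h}(L_{\bar h}+K_{\bar h})=1$ every factor equals one, so in \eqref{eq:phi_x_d_short} each $\phi_{\bar h}(\vb*{x}_{\myd})$ consists of exactly one trigonometric term plus the constant $e_{\bar h}$, and each $\sigma_{\bar h}=\d{\phi}_{\bar h}(\vb*{x}_{\myd})$ is a single scaled sine or cosine whose amplitude is a nonzero multiple of its only frequency $P_{\bar h,1}$. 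Since $L_{\bar h},K_{\bar h}\le 1$, the inner conjunctions in \eqref{eq:omega_1} over $l_{1}<l_{2}$ and $k_{1}<k_{2}$ are empty, hence $\Omega^{(1)}=\mathbb{R}^{m}$; and since $\vb*{P}$ has the single column $\mathcal{P}_{1}$ with $P_{\bar h,1}$ the unique element of $W_{\bar h}$, the index sets $\bar{L},\bar{K}$ in \eqref{eq:omega_2_index} are empty, so $\RN{3}_{\mya},\RN{3}_{\myb}$ in \eqref{eq:omega_2} are vacuous. Thus the lone disjunct $n_{\myP}=1$ of \eqref{eq:omega_2} reduces to $\RN{1}\wedge\RN{2}$ and $\Omega=\Omega^{(2)}=\{\vb*{\omega}:\RN{1}(\vb*{\omega})\wedge\RN{2}(\vb*{\omega})\}$, a set that is nonempty since the negations of $\RN{1},\RN{2}$ cut out only finitely many hyperplanes (consistent with Lemma~\ref{lemma:omega_not_empty}).

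The equivalence is then a short chain. The direction $\vb*{\omega}\in\Omega\Rightarrow\vb*{\sigma}$ PE is exactly Theorem~\ref{theorem:frequ_cond}, which applies because $\vb*{\omega}\in\Omega$ already forces $\Omega\ne\emptyset$. For the converse I argue by contraposition from the explicit form of $\Omega$: if $\vb*{\omega}\notin\Omega$, then $\RN{1}$ or $\RN{2}$ fails. If $\RN{1}$ fails, some $P_{\bar h,1}=0$, hence $\sigma_{\bar h}\equiv 0$. If $\RN{2}$ fails, there are indices $\bar h_{1}\ne\bar h_{2}$ of the same trigonometric type ($T_{\bar h_{1},1}=T_{\bar h_{2},1}$, cf.\ Definition~\ref{def:PT}) with $P_{\bar h_{1},1}=\pm P_{\bar h_{2},1}$, and since $\sin$ is odd and $\cos$ is even the single-term signals $\sigma_{\bar h_{1}}$ and $\sigma_{\bar h_{2}}$ are then proportional. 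Either way there is a unit vector $\vb*{\iota}\in\mathbb{R}^{h}$, supported on $\{\bar h\}$ resp.\ $\{\bar h_{1},\bar h_{2}\}$, with $\vb*{\sigma}^{\intercal}\vb*{\iota}\equiv 0$, so $\frac{1}{T}\int_{t}^{t+T}\abs{\vb*{\sigma}^{\intercal}\vb*{\iota}}\,\mathrm{d}\tau=0$ for all $t$ and neither \eqref{eq:pe_cond_2} nor \eqref{eq:pe_cond_1} can hold for any positive constant; hence $\vb*{\sigma}$ is not PE. Equivalently, for $N_{\myP}=1$ the finitely many single-frequency components $\sigma_{1},\dots,\sigma_{h}$ are linearly independent as functions of $t$ exactly when $\RN{1}\wedge\RN{2}$ holds, and by Definition~\ref{def:pe_new} together with \cite[Sublemma~6.1, Lemma~6.2]{Narendra.2005} this independence is in turn equivalent to $\vb*{\sigma}$ being PE, which closes the loop $\vb*{\omega}\in\Omega\Leftrightarrow\RN{1}\wedge\RN{2}\Leftrightarrow\vb*{\sigma}$ PE.

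The only genuinely new ingredient, and hence the main obstacle, is the necessity $\vb*{\sigma}$ PE $\Rightarrow\vb*{\omega}\in\Omega$: it rests entirely on $N_{\myP}=1$, because only then does each $\sigma_{\bar h}$ carry a single spectral line, so that a violated frequency condition immediately exhibits an explicit annihilating direction $\vb*{\iota}$; for $N_{\myP}>1$ a nonzero remainder $\varepsilon_{\bar h}$ (cf.\ the proof of Theorem~\ref{theorem:frequ_cond}) could re-create excitation and no such converse holds. The remainder is pure bookkeeping---verifying that $\Omega^{(1)}$ degenerates to $\mathbb{R}^{m}$ and that $\RN{3}_{\mya},\RN{3}_{\myb}$ drop out---so that $\Omega$ genuinely equals $\{\vb*{\omega}:\RN{1}\wedge\RN{2}\}$.
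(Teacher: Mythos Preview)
Your proposal is correct and follows essentially the same route as the paper: both arguments observe that $N_{\myP}=1$ collapses $\Omega^{(1)}$ to $\mathbb{R}^{m}$ and eliminates the $\RN{3}$-conditions, so that $\Omega=\{\vb*{\omega}:\RN{1}\wedge\RN{2}\}$, invoke Theorem~\ref{theorem:frequ_cond} for sufficiency, and obtain necessity by exhibiting an annihilating direction when $\RN{1}$ or $\RN{2}$ fails. The only difference is presentational: the paper outsources the necessity construction to the $\varepsilon_{\bar h}=0$ case already treated inside the proof of Theorem~\ref{theorem:frequ_cond}, whereas you rebuild those explicit vectors $\vb*{\iota}$ from scratch---same content, slightly more self-contained on your side.
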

\begin{proof}
	Since $N_{\myP}=1$ results in $\Omega^{(1)} = \mathbb{R}^{m}$ and the non-existence of conditions denoted by $\RN{3}$ in $\Omega^{(2)}$~\eqref{eq:omega_2}, the conditions defining the set $\Omega$ are solely those labeled by $\RN{1}$ and $\RN{2}$ in $\Omega^{(2)}$. Furthermore, $N_{\myP}=1$ corresponds with $\varepsilon_{\bar{h}} = 0$ ($\forall \bar{h} \in \{1,\dots,h\}$) in \eqref{eq:bew_omega_3}. In this case, $\RN{1}$ and $\RN{2}$ in $\Omega^{(2)}$ are necessary and sufficient to guarantee \eqref{eq:alpha_sig} and even $\vb*{\alpha}^{\intercal}\vb*{\sigma} \neq 0$ (cf. proof of Theorem~\ref{theorem:frequ_cond}). Using \cite[Sublemma~6.1]{Narendra.2005} and \cite[Lemma~6.2]{Narendra.2005} as in the proof of Proposition~\ref{proposition:def_omega}, necessity and sufficiency in case of $N_{\myP}=1$ follows.	
\end{proof}

\begin{lemma} \label{lemma:scaling_x_d}
	Consider a set $\Omega$ according to Theorem~\ref{theorem:frequ_cond}. Then, any vector $\vb*{\omega} \in \Omega$ ensures that $\d{\vb*{\phi}}(\vb*{\bar{x}}_{\textup{d}})$ with	
	\begin{align} \label{eq:scaling_x_d}
		\begin{aligned}
			\bar{\vb*{x}}_{\myd} = \textup{diag}\left(\nu_{1}, \nu_{2}, \dots, \nu_{n}\right) \vb*{x}_{\myd}, 
		\end{aligned}
	\end{align}
	$\nu_{j} \in \mathbb{R}_{\neq 0}, \forall j \in \{1,\dots,n\}$, is PE.
\end{lemma}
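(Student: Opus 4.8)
The plan is to reduce the statement to the elementary fact that multiplying a PE signal by a constant, invertible matrix again yields a PE signal, the point being that componentwise scaling of $\vb*{x}_{\myd}$ does nothing more than that to $\vb*{\sigma}$. The key leverage is Assumption~\ref{asm:phi_h}: since each $\phi_{\bar h}$ is a single monomial, substituting $\bar{x}_{\myd,j}=\nu_j x_{\myd,j}$ gives
\begin{align*}
\phi_{\bar h}(\bar{\vb*{x}}_{\myd}) = \bar{f}_{\bar h}\prod_{j=1}^{n}\bigl(\nu_j x_{\myd,j}\bigr)^{f_{j,\bar h}} = \Bigl(\prod_{j=1}^{n}\nu_j^{f_{j,\bar h}}\Bigr)\phi_{\bar h}(\vb*{x}_{\myd}) =: \kappa_{\bar h}\,\phi_{\bar h}(\vb*{x}_{\myd}),
\end{align*}
and $\kappa_{\bar h}\neq 0$ because every $\nu_j\neq 0$. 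Differentiating in time, $\bar{\vb*{\sigma}} := \d{\vb*{\phi}}(\bar{\vb*{x}}_{\myd}) = \vb*{D}\vb*{\sigma}$ with the constant, invertible matrix $\vb*{D}:=\diag(\kappa_1,\dots,\kappa_h)$ and $\vb*{\sigma}=\d{\vb*{\phi}}(\vb*{x}_{\myd})$.

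Next I would invoke Theorem~\ref{theorem:frequ_cond}: for $\vb*{\omega}\in\Omega$ the signal $\vb*{\sigma}$ is PE, so there exist $\alpha,T>0$ with $\int_{t}^{t+T}\vb*{\sigma}\vb*{\sigma}^{\intercal}\,\mathrm{d}\tau\succeq\alpha\vb*{I}$ for all $t\geq t_0$. Boundedness and piecewise continuous differentiability of $\bar{\vb*{\sigma}}=\vb*{D}\vb*{\sigma}$ are immediate since $\vb*{D}$ is constant, and
\begin{align*}
\int_{t}^{t+T}\bar{\vb*{\sigma}}\,\bar{\vb*{\sigma}}^{\intercal}\,\mathrm{d}\tau = \vb*{D}\Bigl(\int_{t}^{t+T}\vb*{\sigma}\vb*{\sigma}^{\intercal}\,\mathrm{d}\tau\Bigr)\vb*{D}^{\intercal} \succeq \alpha\,\vb*{D}\vb*{D}^{\intercal} \succeq \alpha\bigl(\min_{\bar h}\kappa_{\bar h}^{2}\bigr)\vb*{I},
\end{align*}
where $\min_{\bar h}\kappa_{\bar h}^{2}>0$. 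Hence $\bar{\vb*{\sigma}}$ satisfies \eqref{eq:pe_cond_1} with the positive constant $\alpha\min_{\bar h}\kappa_{\bar h}^{2}$ and the same $T$, which by Definition~\ref{def:pe_new} (\eqref{eq:pe_cond_1} being one of the equivalent PE conditions) establishes that $\bar{\vb*{\sigma}}=\d{\vb*{\phi}}(\bar{\vb*{x}}_{\myd})$ is PE. As an alternative route one may observe that $\bar{\vb*{x}}_{\myd}$ again conforms to Assumption~\ref{asm:xd_o} with the identical frequency vector $\vb*{\omega}$ and the identical zero/non-zero pattern of the coefficients, so Lemma~\ref{lemma:trig_simplification} reproduces the very same frequencies $\myw_{l,\bar h},\bar{\myw}_{k,\bar h}$ and counts $L_{\bar h},K_{\bar h}$ (only the amplitudes are multiplied by $\kappa_{\bar h}\neq 0$); consequently $\Omega^{(1)}$, $\Omega^{(2)}$ and $\Omega$ are unchanged and Theorem~\ref{theorem:frequ_cond} applies verbatim.

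The only place that requires care — and the sole point where Assumption~\ref{asm:phi_h} is genuinely used — is the claim that per-component scaling of the argument neither shifts the frequency content nor annihilates any amplitude of $\vb*{\phi}(\vb*{x}_{\myd})$; the single-monomial form turns the componentwise scaling into one global nonzero scalar factor $\kappa_{\bar h}$ per entry, which is exactly what is needed. Everything after that is routine, so I expect no real obstacle beyond spelling out the positive-definiteness estimate above.
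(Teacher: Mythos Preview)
Your proposal is correct. The first step --- recognizing that Assumption~\ref{asm:phi_h} forces $\phi_{\bar h}(\bar{\vb*{x}}_{\myd})=\kappa_{\bar h}\,\phi_{\bar h}(\vb*{x}_{\myd})$ with $\kappa_{\bar h}=\prod_{j}\nu_j^{f_{j,\bar h}}\neq 0$ --- is exactly what the paper does. From there the two arguments diverge slightly. The paper concludes by observing that the scaling merely modifies the amplitudes $a_{l,\bar h},c_{k,\bar h},e_{\bar h}$ in the representation of Lemma~\ref{lemma:trig_simplification}, and since these amplitudes never enter the construction of $\Omega$ nor the verification of \eqref{eq:alpha_sig}, Theorem~\ref{theorem:frequ_cond} applies to $\bar{\vb*{x}}_{\myd}$ with the \emph{same} set $\Omega$; this is precisely the ``alternative route'' you sketch at the end. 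Your primary route instead packages the scalars into the constant invertible matrix $\vb*{D}=\diag(\kappa_1,\dots,\kappa_h)$, invokes Theorem~\ref{theorem:frequ_cond} for $\vb*{\sigma}$, and then passes PE through $\vb*{D}$ via the elementary estimate $\vb*{D}(\int\vb*{\sigma}\vb*{\sigma}^{\intercal})\vb*{D}^{\intercal}\succeq \alpha\min_{\bar h}\kappa_{\bar h}^{2}\,\vb*{I}$. This is arguably cleaner because it avoids re-examining the internals of $\Omega^{(1)}$ and $\Omega^{(2)}$; the paper's version, on the other hand, makes explicit that the set $\Omega$ itself is invariant under the scaling, which is useful later when $\Omega$ is computed once from $\vb*{x}_{\myd}$ and then $\vb*{\nu}$ is treated as a free design parameter (cf.\ Remark~\ref{remark:dofs}).
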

\begin{proof}
	Inserting \eqref{eq:scaling_x_d} into \eqref{eq:assumption_phi} yields
	\begin{align}
		\phi_{\bar{h}}(\bar{\vb*{x}}_{\myd}) &= \bar{f}_{\bar{h}}\prod_{j=1}^{n} \bar{x}^{f_{j,\bar{h}}}_{\myd,j} \nonumber \\ &= \left( \prod_{j=1}^{n} \nu^{f_{j,\bar{h}}}_{j}\right) \left(\bar{f}_{\bar{h}} \prod_{j=1}^{n} x^{f_{j,\bar{h}}}_{\myd,j}\right) \nonumber \\
		&= \left( \prod_{j=1}^{n} \nu^{f_{j,\bar{h}}}_{j}\right) \phi_{\bar{h}}(\vb*{x}_{\myd}), 
	\end{align}
	$\forall \bar{h} \in \{1,\dots,h\}$. Thus, scaling $\vb*{x}_{\myd}$ in consistency with \eqref{eq:scaling_x_d} solely results in a modification of the coefficients $a_{l,\bar{h}}$, $c_{k,\bar{h}}$ and $e_{\bar{h}}$ in \eqref{eq:phi_x_d_short}. Since their exact values are negligible for the calculation of $\Omega$ and the fulfillment of \eqref{eq:alpha_sig}, any vector $\vb*{\omega} \in \Omega$, where $\Omega$ is calculated on the basis of $\vb*{x}_{\myd}$, ensures that $\d{\vb*{\phi}}(\vb*{\bar{x}}_{\textup{d}})$ is PE.
\end{proof}

In general, the basis function vectors $\vb*{\phi}_{i}(\vb*{x})$ ($i \in \mathcal{N}$) of the $N$ players differ. If, from a given $\vb*{x}_{\myd}$ according to Assumption~\ref{asm:xd_o}, the proposed procedure is done separately for each $\vb*{\phi}_{i}(\vb*{x})$, $N$ sets $\Omega_{i}$ are derived.
Considering their intersection in the same manner as introduced in Lemma~\ref{lemma:intersection} leads to $\Omega = \bigcap_{i=1}^{N} \Omega_{i}$ and any vector $\vb*{\omega} \in \Omega$ ensures that $\d{\vb*{\phi}}_{i}(\vb*{x}_{\myd})$ ($\forall i \in \mathcal{N}$) is PE.

Theorem~\ref{theorem:frequ_cond} is the main result of our paper and provides a novel and generally applicable statement regarding PE signals which are the outcome of nonlinear transformations. It solves Problem~\ref{problem:PE} since $\vb*{x}_{\myd}(t)$ with any $\vb*{\omega} \in \Omega$ represents a suited trajectory $\vb*{x}(t)$.


\section{Excitation Procedures for ADP}\label{sec:excitation_procedures}
Based on the proposed solution to Problem~\ref{problem:PE}, Problem~\ref{problem:excitation} regarding the calculation of excitation signals for (online) ADP algorithms, which aim at solving the differential game defined in Section~\ref{sec:problem} and rely on the PE of $\vb*{\sigma}_i$ ($\forall i \in \mathcal{N}$) to converge to the Nash strategies, can be stated.

\begin{assumption} \label{asm:knwon_fgphi}
	Let $\vb*{f}(\vb*{x})$, $\vb*{g}(\vb*{x})$, $\vb*{\phi}_{i}(\vb*{x})$ and $\vb*{R}_{ii}$ ($\forall i \in \mathcal{N}$) be known. 
\end{assumption}

\begin{problem} \label{problem:excitation}
	Let Assumption~\ref{asm:knwon_fgphi} hold. Compute an excitation signal $\hat{\vb*{u}} = \begin{bmatrix} \hat{\vb*{u}}^{\intercal}_{1} & \hat{\vb*{u}}^{\intercal}_{2} & \dots & \hat{\vb*{u}}^{\intercal}_{N} \end{bmatrix}^{\intercal}$ offline, where ${\hat{\vb*{u}}_{i}: \mathbb{R}_{\geq 0} \rightarrow \mathbb{R}^{p_i}}$ ($i \in \mathcal{N}$) is added to the control law of player~$i$ (${\vb*{u}_{i} = \vb*{\mu}_{i}(\vb*{x}) + \hat{\vb*{u}}_{i}}$), such that $\vb*{\sigma}_i$ ($\forall i \in \mathcal{N}$) is PE.
\end{problem}

Calculating an excitation signal $\hat{\vb*{u}}$ through an inversion-based feed-forward control approach (see e.g. \cite{Fliess.1995}) by using the solution of Problem~\ref{problem:PE}, i.e. a trajectory $\vb*{x}(t)$ guaranteeing that $\vb*{\sigma}_i=\d{\vb*{\phi}}_{i}(\vb*{x}(t))$ is PE $\forall i \in \mathcal{N}$, solves Problem~\ref{problem:excitation}.

First, choose $\vb*{x}_{\myd}(t)$ according to Assumption~\ref{asm:xd_o}. Theorem~\ref{theorem:frequ_cond} leads to a set $\Omega$ such that any $\vb*{\omega} \in \Omega$ ensures that $\dot{\vb*{\phi}}_{i}(\vb*{x}_{\myd})$ ($\forall i \in \mathcal{N}$) is PE. Then, a concrete numerical value $\vb*{\omega}_{\myc} \in \Omega$ and a scaling $\vb*{\nu}$ of $\vb*{x}_{\myd}$ (cf. Lemma~\ref{lemma:scaling_x_d}) leads to the reference trajectory $\bar{\vb*{x}}_{\myd} = \text{diag}(\vb*{\nu})\left.\vb*{x}_{\myd}\right\vert_{\vb*{\omega}=\vb*{\omega}_{\myc}}$ for the inversion-based feed-forward control approach. We briefly discuss the calculation of $\hat{\vb*{u}}$ based on $\bar{\vb*{x}}_{\myd}$ for the example of differentially flat systems \cite{Fliess.1995}.

\begin{assumption} \label{asm:state_linearization}
	Let $\vb*{g}(\vb*{x})$ be rewritten as $\bar{\vb*{g}}(\vb*{x}) = \begin{bmatrix} \vb*{g}_{\RN{1}}(\vb*{x}) & \vb*{g}_{\RN{2}}(\vb*{x}) \end{bmatrix}$, where $\vb*{g}_{\RN{1}}: \mathbb{R}^{n} \rightarrow \mathbb{R}^{n \times p_{\RN{1}}}$, $\vb*{g}_{\RN{2}}: \mathbb{R}^{n} \rightarrow \mathbb{R}^{n \times p_{\RN{2}}}$ and $\vb*{\gamma}(\vb*{x})$ such that $\vb*{g}_{\RN{2}}(\vb*{x}) = \vb*{g}_{\RN{1}}(\vb*{x})\vb*{\gamma}(\vb*{x})$ and $\rank\left( \vb*{g}_{\RN{1}}(\vb*{x}) \right) = p_{\RN{1}}$. Rearrange the system inputs accordingly: $\bar{\vb*{u}} = \begin{bmatrix} \vb*{u}_{\RN{1}}^{\intercal} & \vb*{u}_{\RN{2}}^{\intercal} \end{bmatrix}^{\intercal}$. Assume that for $\d{\vb*{x}} = \vb*{f}_{\myg} (\vb*{x}) + \vb*{g}_{\RN{1}}(\vb*{x}) \vb*{u}_{\RN{1}}$, where $\vb*{f}_{\myg} (\vb*{x}) = \vb*{f}(\vb*{x}) + \vb*{g}(\vb*{x})\vb*{\mu}(\vb*{x})$, an output function $\vb*{y} = \vb*{h}(\vb*{x})$ ($\dim(\vb*{h}(\vb*{x}))= p_{\RN{1}}$) for exact state linearization exists.\footnote{Assumption~\ref{asm:state_linearization} needs to hold on $\mathcal{X} \subseteq \mathbb{R}^{n}$.} 
\end{assumption}

Since $\vb*{g}_{\RN{2}}(\vb*{x})$ can be expressed by $\vb*{g}_{\RN{1}}(\vb*{x})$, $\hat{\vb*{u}}_{\RN{2}}$ is set to zero without loss for the excitation. The remaining part $\hat{\vb*{u}}_{\RN{1}}$ of the excitation signal $\hat{\bar{\vb*{u}}}$ is derived analytically. Due to the relation between the exact state linearizability of a system and its flatness \cite{Fliess.1995}, from Assumption~\ref{asm:state_linearization} the flatness of $\d{\vb*{x}} = \vb*{f}_{\myg} (\vb*{x}) + \vb*{g}_{\RN{1}}(\vb*{x}) \vb*{u}_{\RN{1}}$ follows with $\vb*{y}$ as flat output. Moreover, the output function $\vb*{h}(\vb*{x})$ leads to full differential order and the diffeomorphism $\vb*{t}(\vb*{x})$ transforming the considered system into nonlinear controllable canonical form can be stated \cite[p.~245]{Isidori.1989}). From this normal form 
\begin{align}
	\vb*{z} &= \vb*{\Psi}_{1}\left( \vb*{y}, \d{\vb*{y}}, \dots, \vb*{y}^{(\delta_{\max} - 1)}  \right)
	\intertext{and}
	\vb*{u}_{\RN{1}} &= \vb*{\Psi}_{2}\left( \vb*{y}, \d{\vb*{y}}, \dots, \vb*{y}^{(\delta_{\max})} \right),
\end{align}
where $\vb*{z}$ is the transformed system state and $\delta_{\max} = \max\{\delta_1, \dots, \delta_{p_{\RN{1}}}\}$ ($\delta_j$ represent the relative degrees \cite[p.~235]{Isidori.1989}), are derived. 
With the chosen $\bar{\vb*{x}}_{\myd}$, $\hat{\vb*{u}}_{\RN{1}}$ follows from $\vb*{\Psi}_{2}$.
Rearranging $\hat{\bar{\vb*{u}}}$ results in $\hat{\vb*{u}}$, which is in general a function of $t$ and $\vb*{\theta}_i$.

\begin{assumption} \label{asm:excitation_regulation}
	Let $\hat{\vb*{u}}$ be the excitation signal achieved by the flatness-based feed-forward control approach based on Assumption~\ref{asm:state_linearization}. Assume that the system state $\hat{\vb*{x}}$, which is calculated a priori by using $\vb*{\Psi}_{1}$, $\vb*{t}(\vb*{x})$ and $\bar{\vb*{x}}_{\myd}$,\footnote{Thus, $\hat{\vb*{x}}$ denotes the system state without online influences such as transient phenomena.} results in
	\begin{align} \label{eq:asm_sigma_hat}
		\begin{aligned}
			\hat{\vb*{\sigma}}_{i}\coloneqq{\vb*{\sigma}}_{i}(\hat{\vb*{x}}(t)) = \d{\vb*{\phi}}_{i}(\hat{\vb*{x}}(t)) = \vb*{M}_{i} \dv{\myi(t)}{t}, 
		\end{aligned}
	\end{align}
	where $\vb*{M}_i \in \mathbb{R}^{h_i \times \bar{h}_i}$ ($h_i \leq \bar{h}_i$) and $\myi$ denotes a vector with an orthogonal functional system of sines and cosines in its elements\footnote{Whether \eqref{eq:asm_sigma_hat} holds, is a priori verifiable. The frequencies in $\myi$ can be expressed as linear combinations of the entries in $\vb*{\omega}_{\myc}$.}. Furthermore, suppose that the system state $\vb*{x}$ resulting online from the controllers $\vb*{\mu}_1,\dots,\vb*{\mu}_N$ and the excitation signal $\hat{\vb*{u}}$ leads to 
	\begin{align}\label{eq:sigma_hat+epsilon}
		\vb*{\sigma}_{i}(t)=\hat{\vb*{\sigma}}_{i}(t)+\vb*{\varepsilon}_{1,i}(t)+\vb*{\varepsilon}_{2,i}(t), \quad \forall i \in \mathcal{N},
	\end{align}
	where $\vb*{\varepsilon}_{1,i}, \vb*{\varepsilon}_{2,i}: \mathbb{R}_{\geq 0} \rightarrow \mathbb{R}^{h_i}$ are continuous and bounded and fulfill $\norm{\vb*{\varepsilon}_{1,i}(t)}\leq \bar{\varepsilon}_{\sigma,i}$, $\forall t$ and $\vb*{\varepsilon}_{2,i}(t) \rightarrow \vb*{0}$ ($t \rightarrow \infty$).
\end{assumption}

\begin{lemma} \label{lemma:excitation_regulation}
	Let $\bar{\vb*{x}}_{\myd}(t) = \diag(\vb*{\nu})\left.\vb*{x}_{\myd}(t)\right\vert_{\vb*{\omega}=\vb*{\omega}_{\myc}}$ follow from Lemma~\ref{lemma:scaling_x_d} where values $\vb*{\omega}_{\myc} \in \Omega$ and $\vb*{\nu}$ have been chosen. Under Assumption~\ref{asm:excitation_regulation}, $\vb*{\sigma}_i (t)$ ($\forall i \in \mathcal{N}$) is PE if $\bar{\varepsilon}_{\sigma,i}$ is sufficiently small and $\rank(\vb*{M}_i)=h_i$ $\forall i \in \mathcal{N}$.
\end{lemma}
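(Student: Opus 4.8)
The plan is to decompose $\vb*{\sigma}_i=\hat{\vb*{\sigma}}_i+(\vb*{\varepsilon}_{1,i}+\vb*{\varepsilon}_{2,i})$, first prove that the a priori computable regressor $\hat{\vb*{\sigma}}_i$ is PE, and then show that PE survives the addition of the (uniformly small, respectively asymptotically vanishing) perturbations. For the first step I would repeat the reasoning of the proof of Proposition~\ref{proposition:def_omega}. By \eqref{eq:asm_sigma_hat}, $\hat{\vb*{\sigma}}_i=\vb*{M}_i\dv{\myi(t)}{t}$, and by Assumption~\ref{asm:excitation_regulation} the entries of $\myi$ form an orthogonal system of sines and cosines with non-zero frequencies; hence the entries of $\dv{\myi}{t}$ are again mutually orthogonal sinusoids with non-zero frequencies, in particular linearly independent functions. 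For an arbitrary unit vector $\vb*{\iota}\in\mathbb{R}^{h_i}$ we have $\vb*{\iota}^{\intercal}\hat{\vb*{\sigma}}_i=(\vb*{M}_i^{\intercal}\vb*{\iota})^{\intercal}\dv{\myi}{t}$, and $\rank(\vb*{M}_i)=h_i$ (full row rank) forces $\vb*{M}_i^{\intercal}\vb*{\iota}\neq\vb*{0}$, so $\vb*{\iota}^{\intercal}\hat{\vb*{\sigma}}_i$ is a non-trivial linear combination of linearly independent sinusoids, i.e.\ a non-zero trigonometric polynomial with at least one spectral line. It is therefore PE by \cite[Sublemma~6.1]{Narendra.2005}, and since $\vb*{\iota}$ was arbitrary, \cite[Lemma~6.2]{Narendra.2005} shows that $\hat{\vb*{\sigma}}_i$ is PE. In particular there exist $\hat{\alpha}_i,T_i>0$ with $\int_{t}^{t+T_i}\hat{\vb*{\sigma}}_i\hat{\vb*{\sigma}}_i^{\intercal}\,\mathrm{d}\tau\succeq\hat{\alpha}_i\vb*{I}$ for all $t\geq 0$, and $\hat{\vb*{\sigma}}_i$ is bounded, $\norm{\hat{\vb*{\sigma}}_i(t)}\leq\hat{\sigma}_{\max,i}$.

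For the second step, set $\vb*{e}_i\coloneqq\vb*{\varepsilon}_{1,i}+\vb*{\varepsilon}_{2,i}$. Continuity and boundedness of $\vb*{\varepsilon}_{1,i},\vb*{\varepsilon}_{2,i}$ and of $\hat{\vb*{\sigma}}_i$ make $\vb*{\sigma}_i$ satisfy the regularity premise of Definition~\ref{def:pe_new}. Since $\vb*{\varepsilon}_{2,i}(t)\to\vb*{0}$, there is a finite $t_{1,i}\geq 0$ with $\norm{\vb*{\varepsilon}_{2,i}(t)}\leq\bar{\varepsilon}_{\sigma,i}$, hence $\norm{\vb*{e}_i(t)}\leq 2\bar{\varepsilon}_{\sigma,i}$, for all $t\geq t_{1,i}$. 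Writing $\vb*{\sigma}_i\vb*{\sigma}_i^{\intercal}=\hat{\vb*{\sigma}}_i\hat{\vb*{\sigma}}_i^{\intercal}+\hat{\vb*{\sigma}}_i\vb*{e}_i^{\intercal}+\vb*{e}_i\hat{\vb*{\sigma}}_i^{\intercal}+\vb*{e}_i\vb*{e}_i^{\intercal}$ and bounding the last three (symmetric) terms by $2\norm{\hat{\vb*{\sigma}}_i}\norm{\vb*{e}_i}+\norm{\vb*{e}_i}^{2}$ in spectral norm gives, for all $t\geq t_{1,i}$,
\begin{align*}
	\int_{t}^{t+T_i}\vb*{\sigma}_i\vb*{\sigma}_i^{\intercal}\,\mathrm{d}\tau\succeq\Big(\hat{\alpha}_i-T_i\big(4\hat{\sigma}_{\max,i}\bar{\varepsilon}_{\sigma,i}+4\bar{\varepsilon}_{\sigma,i}^{2}\big)\Big)\vb*{I}.
\end{align*}
Choosing $\bar{\varepsilon}_{\sigma,i}$ small enough that the coefficient of $\vb*{I}$ on the right exceeds $\hat{\alpha}_i/2>0$, condition~\eqref{eq:pe_cond_1} holds with $\alpha=\hat{\alpha}_i/2$ and $T=T_i$ for all $t\geq t_{1,i}$, and the equivalence recorded in Definition~\ref{def:pe_new} yields~\eqref{eq:pe_cond_2} as well. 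Thus $\vb*{\sigma}_i$ is PE; doing this for every $i\in\mathcal{N}$ and passing to the common constants $T\coloneqq\max_i T_i$ and $t_1\coloneqq\max_i t_{1,i}$ finishes the proof.

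I expect the main obstacle to be reconciling the asymptotically (not uniformly) vanishing transient $\vb*{\varepsilon}_{2,i}$ with the clause of Definition~\ref{def:pe_new} that demands the excitation inequalities for all $t\geq t_0$: the Gramian estimate above is only available from some finite, a priori unknown $t_1$ onwards. I would resolve this by observing that PE from a finite $t_1$ still delivers the exponential decay~\eqref{eq:exponential_decay} (reading the iteration clock from $t_1$), which is all that the ADP convergence argument requires — equivalently, one applies Definition~\ref{def:pe_new} with $t_0\coloneqq t_1$. A second point to make watertight in the first step is that Assumption~\ref{asm:excitation_regulation} genuinely delivers an orthogonal $\myi$ with no accidental frequency collisions; this rests on the genericity of the chosen $\vb*{\omega}_{\myc}$ (which $\vb*{\omega}_{\myc}\in\Omega$ supports), and, as noted after Assumption~\ref{asm:excitation_regulation}, can be checked a priori for a concrete $\vb*{\omega}_{\myc}$.
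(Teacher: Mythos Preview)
Your proof is correct and follows the same two–step skeleton as the paper: first show $\hat{\vb*{\sigma}}_i$ is PE, then show PE survives the perturbation $\vb*{e}_i=\vb*{\varepsilon}_{1,i}+\vb*{\varepsilon}_{2,i}$. For the first step the paper simply invokes \cite[Lemma~6.1]{Narendra.2005} (full row rank of $\vb*{M}_i$ plus PE of $\dv{\myi}{t}$), which packages exactly the unpacked argument you give via Sublemma~6.1/Lemma~6.2; the content is identical. The genuine difference is in the second step. You work with the Gramian form~\eqref{eq:pe_cond_1}, wait until $t\geq t_1$ so that $\norm{\vb*{e}_i}\leq 2\bar{\varepsilon}_{\sigma,i}$, and then accept PE only from $t_1$ onward (your ``main obstacle''). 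The paper instead uses the equivalent form~\eqref{eq:pe_cond_2}: by the reverse triangle inequality,
\[
\tfrac{1}{T}\!\int_{t}^{t+T}\!\abs{\vb*{\sigma}_i^{\intercal}\vb*{\iota}}\,\mathrm{d}\tau
\;\geq\;\alpha_{\RN{1},i}-\bar{\varepsilon}_{\sigma,i}-\tfrac{1}{T}\!\int_{t}^{t+T}\!\abs{\vb*{\varepsilon}_{2,i}^{\intercal}\vb*{\iota}}\,\mathrm{d}\tau,
\]
and since $\vb*{\varepsilon}_{2,i}$ is bounded and tends to zero, the last integral admits a bound $M_{\varepsilon}(T)$ that is \emph{uniform in $t$} with $\tfrac{1}{T}M_{\varepsilon}(T)\to 0$ as $T\to\infty$. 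Hence, by enlarging $T$ (rather than shifting $t_0$), one obtains $\alpha_{\RN{1},i}-\bar{\varepsilon}_{\sigma,i}-\tfrac{1}{T}M_{\varepsilon}(T)>0$ for all $t\geq 0$ whenever $\bar{\varepsilon}_{\sigma,i}$ is small enough. This dissolves the obstacle you flagged and yields PE from $t_0=0$; your route is perfectly valid too but trades the clean ``for all $t$'' conclusion for the easier spectral-norm bookkeeping on the Gramian.
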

\begin{proof}
	The proof is given in Appendix~\ref{app:proof_lemma_excitation_regulation}.
\end{proof}

Assumption~\ref{asm:excitation_regulation} takes into account that inversion-based feed-forward control goes along with transient phenoma. Whereas this is considered by $\vb*{\varepsilon}_{2,i}(t)$ in \eqref{eq:sigma_hat+epsilon}, $\vb*{\varepsilon}_{1,i}(t)=\vb*{0}$ holds under Assumption~\ref{asm:knwon_fgphi}.

\begin{remark} \label{remark:dofs}
	The degrees of freedom (DOFs), namely ${\vb*{\omega}_{\myc} \in \Omega}$ and $\vb*{\nu}$, provided by the proposed excitation procedure can be used to adjust the excitation to further constraints. This includes application-specific requirements, such as low-pass characteristics, or a better fulfillment of the PE conditions, such as the degree of PE\footnote{The lower bounds $\alpha$ and $\alpha_{\RN{1}}$ in \eqref{eq:pe_cond_1} and \eqref{eq:pe_cond_2} are called degrees of PE.}.
\end{remark}

\begin{remark} \label{remark:tracking}
	For tracking controllers, $\bar{\vb*{x}}_{\myd}$ can directly be used as sufficiently rich reference trajectory. Hereto, a sufficient tracking performance in the absence of PE needs to be assumed\footnote{Realizing sufficient tracking performance under the absence of PE, improving performance and guaranteeing parameter convergence with PE afterwards is a common approach in the adaptive control literature (see e.g. \cite{Adetola.2008}).} such that \eqref{eq:sigma_hat+epsilon} holds with $\hat{\vb*{\sigma}}_i(t) \equiv \dot{\vb*{\phi}}_i(\bar{\vb*{x}}_{\myd}(t))$ and $\bar{\varepsilon}_{\sigma,i}$ sufficiently small. From the first part of the proof of Lemma~\ref{lemma:excitation_regulation} the PE of $\vb*{\sigma}_{i}$ ($\forall i \in \mathcal{N}$) results. While $\vb*{\varepsilon}_{1,i}(t)$ describes small tracking errors in this case, $\vb*{\varepsilon}_{2,i}(t)$ represents transient phenomena again.
\end{remark}

\begin{remark}
	Although $\vb*{\phi}_i(\vb*{x})$ is introduced as basis function vector for value function approximation in \eqref{eq:approx_value_i}, it can also represent an arbitrary regressor used by the ADP algorithm considered. The sufficient excitation conditions in Section~\ref{sec:excitation_conditions} yield state trajectories $\vb*{x}_{\myd}$ shaping this regressor (and its time derivative) PE. If Assumption~\ref{asm:phi_h} is not fulfilled, Taylor approximations might be used. In case of an inversion-based feed-forward excitation design, Lemma~\ref{lemma:excitation_regulation} can be used to show that the signal remains PE under small approximation errors.
\end{remark}

The next lemma introduces eigenvalue signals that can be used to verify the fulfillment of the PE condition in simulation. Here, $\lambda_{\min}(\cdot)$ denotes the minimal eigenvalue of a matrix.
\begin{lemma} \label{lemma:eigenvalue_signals}
	Let $\vb*{\sigma}: \mathbb{R}_{\geq 0} \rightarrow \mathbb{R}^{h}$. Then, $\vb*{\sigma}$ is PE $\forall t \geq t_{0}$ iff constants $\alpha > 0$ and $T > 0$ exist such that 
	\begin{align} \label{eq:eigvalue_signal_1}
		\begin{aligned}
			\lambda_{1}(t) = \lambda_{\min}\left( \int_{t}^{t+T} \vb*{\sigma}\vb*{\sigma}^{\intercal} \,\mathrm{d}\tau \right) \geq \alpha > 0, \quad \forall t \geq t_{0}
		\end{aligned}
	\end{align} 
	holds. Furthermore, if $\vb*{\sigma}$ is PE,
	\begin{align} \label{eq:eigvalue_signal_2}
		\begin{aligned}
			\lambda_{2}(t) = \lambda_{\min}\left( \int_{t_{0}}^{t} \vb*{\sigma}\vb*{\sigma}^{\intercal} \,\mathrm{d}\tau \right), \quad \forall t \geq t_{0}
		\end{aligned}
	\end{align}
	increases monotonically and $\lambda_{2}(kT + t_{0}) \geq k\alpha$ ($k \in \mathbb{N}_{\geq 0}$).
\end{lemma}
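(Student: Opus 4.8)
The plan is to establish both claims directly from Definition~\ref{def:pe_new}, exploiting the equivalence between PE and the matrix condition \eqref{eq:pe_cond_1}. For the first equivalence, I would argue that \eqref{eq:eigvalue_signal_1} is merely a restatement of \eqref{eq:pe_cond_1}: for a symmetric positive semi-definite matrix $\vb*{A} = \int_{t}^{t+T} \vb*{\sigma}\vb*{\sigma}^{\intercal}\,\mathrm{d}\tau$, the condition $\vb*{A} \succeq \alpha\vb*{I}$ holds if and only if $\lambda_{\min}(\vb*{A}) \geq \alpha$, since the smallest eigenvalue equals $\min_{\norm{\vb*{\iota}}_2 = 1} \vb*{\iota}^{\intercal}\vb*{A}\vb*{\iota}$ by the Rayleigh quotient characterization. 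Requiring this for all $t \geq t_0$ with a uniform $\alpha > 0$ (and the same $T$) is exactly the PE condition. I would also note that the integrand $\vb*{\sigma}\vb*{\sigma}^{\intercal}$ is symmetric positive semi-definite, so the integral is as well, hence $\lambda_1(t) \geq 0$ always and the content of the condition is the strictly positive uniform lower bound.

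For the second claim, suppose $\vb*{\sigma}$ is PE with constants $\alpha, T$. Monotonicity of $\lambda_2$ follows because, for $t_2 > t_1 \geq t_0$, we have $\int_{t_0}^{t_2}\vb*{\sigma}\vb*{\sigma}^{\intercal}\,\mathrm{d}\tau = \int_{t_0}^{t_1}\vb*{\sigma}\vb*{\sigma}^{\intercal}\,\mathrm{d}\tau + \int_{t_1}^{t_2}\vb*{\sigma}\vb*{\sigma}^{\intercal}\,\mathrm{d}\tau \succeq \int_{t_0}^{t_1}\vb*{\sigma}\vb*{\sigma}^{\intercal}\,\mathrm{d}\tau$, and $\vb*{A} \succeq \vb*{B} \succeq \vb*{0}$ implies $\lambda_{\min}(\vb*{A}) \geq \lambda_{\min}(\vb*{B})$ again via the Rayleigh quotient. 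For the growth bound, I would split the interval $[t_0, kT + t_0]$ into the $k$ consecutive blocks $[t_0 + (j-1)T,\, t_0 + jT]$ for $j = 1,\dots,k$, so that $\int_{t_0}^{kT+t_0}\vb*{\sigma}\vb*{\sigma}^{\intercal}\,\mathrm{d}\tau = \sum_{j=1}^{k}\int_{t_0+(j-1)T}^{t_0+jT}\vb*{\sigma}\vb*{\sigma}^{\intercal}\,\mathrm{d}\tau \succeq \sum_{j=1}^{k}\alpha\vb*{I} = k\alpha\vb*{I}$, where each block satisfies $\succeq \alpha\vb*{I}$ by \eqref{eq:pe_cond_1}. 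Taking $\lambda_{\min}$ of both sides gives $\lambda_2(kT + t_0) \geq k\alpha$.

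The only mild subtleties, rather than genuine obstacles, are: confirming that $\lambda_{\min}$ is order-preserving on the positive semi-definite cone (Rayleigh quotient / Weyl's inequality) and that it is continuous in $t$ so that $\lambda_1$ and $\lambda_2$ are legitimate signals (continuity of the integral in its limits plus continuity of eigenvalues in the matrix entries); both are standard. I expect the write-up to be short: state the Rayleigh quotient identity $\lambda_{\min}(\vb*{A}) = \min_{\norm{\vb*{\iota}}_2=1}\vb*{\iota}^{\intercal}\vb*{A}\vb*{\iota}$, use it once for the equivalence with \eqref{eq:pe_cond_1}, and use its order-preserving consequence twice for monotonicity and for the $k\alpha$ bound. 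No result beyond Definition~\ref{def:pe_new} and elementary linear algebra is needed.
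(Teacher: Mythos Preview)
Your proposal is correct and follows essentially the same route as the paper's proof. The paper also reduces the first claim to the equivalence $\vb*{A}\succeq\alpha\vb*{I}\Leftrightarrow\lambda_{\min}(\vb*{A})\geq\alpha$ (phrased via diagonalization rather than the Rayleigh quotient, but this is the same fact), and for the second claim it likewise splits $[t_0,\,kT+t_0]$ into $k$ blocks each satisfying $\succeq\alpha\vb*{I}$ and obtains monotonicity from the positive semi-definiteness of the tail integral.
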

\begin{proof}
	The proof is given in Appendix~\ref{app:proof_lemma_eigenvalue}.
\end{proof}

Since the calculation of $\lambda_{1}(t)$ requires a parameter $T$, we first compute $\lambda_{2}(t)$ and identify the point in time $T_{1}$, where $\lambda_{2} \geq \alpha_{1}$ holds ($\alpha_{1}$ is a numerical threshold). Using this $T_{1}$ to compute $\lambda_{1}(t)$, one can verify if \eqref{eq:eigvalue_signal_1} holds with $T=T_{1}$ and $\alpha=\alpha_{1}$. Then, according to Lemma~\ref{lemma:eigenvalue_signals}, $\vb*{\sigma}$ is PE.


\section{Numerical Example} \label{sec:simulation}
The 2-player test scenario is defined by the system functions
\begin{align} 
	\vb*{f}(\vb*{x}) \!&=\!\! \begin{bmatrix} -2x_{1} \!+\! x_{2} \\ -x_{2} \!\!-\!\! \frac{1}{2}x_{1} \!\!+\!\! \frac{1}{4} x_{2}\left( (\cos(2x_{1})\!+\!2)^{2} \!+\! (\sin(4x^{2}_{1})\!+\!2)^{2} \right)\end{bmatrix}\!\!, \label{eq:example_f} \\
	\vb*{g}(\vb*{x}) \!&=\!\! \begin{bmatrix} \vb*{g}_1(\vb*{x}) & \!\!\!\vb*{g}_2(\vb*{x}) \end{bmatrix}\!=\!\!\begin{bmatrix} 0 & \!\!\!0 \\ \cos(2x_{1}) \!+\! 2 & \!\!\!\sin(4x^{2}_{1}) \!+\! 2 \end{bmatrix} \label{eq:example_g}
\end{align}
and the value functions of the two players
\begin{align} \label{eq:example_J}
	\begin{aligned}
		V^{\vb*{\mu}}_{1}(\vb*{x}(t)) &= \int_{t}^{\infty} 2(x^{2}_{1} + x^{2}_{2}) + 2(\mu^{2}_{1} + \mu^{2}_{2}) \,\mathrm{d}\tau, \\
		V^{\vb*{\mu}}_{2}(\vb*{x}(t)) &= \frac{1}{2}V^{\vb*{\mu}}_{1}(\vb*{x}(t)).
	\end{aligned}
\end{align}
$V^{*}_{1}(\vb*{x}) = \frac{1}{2} x^{2}_{1} + x^{2}_{2}$ and $V^{*}_{2}(\vb*{x}) = \frac{1}{4} x^{2}_{1} + \frac{1}{2} x^{2}_{2}$ result (see \cite{Nevistic.1996}). Let $\vb*{\phi}(\vb*{x}) = \vb*{\phi}_{1}(\vb*{x}) = \vb*{\phi}_{2}(\vb*{x}) = \begin{bmatrix} x^{2}_{1} & x_{1}x_{2} & x^{2}_{2} \end{bmatrix}^{\intercal}$. The Nash strategies are calculated online by a real-time implementation of a classical policy iteration (PI) algorithm. The players start with initial weights $\vb*{\theta}_{1,0} = \begin{bmatrix} 1.783 & -2.33 & 2.215 \end{bmatrix}^{\intercal}$ and $\vb*{\theta}_{2,0} = \begin{bmatrix} 0.8916 & -1.165 & 1.107 \end{bmatrix}^{\intercal}$, fix their corresponding control laws ($\vb*{\mu}^{0}_{1/2}$) and use a gradient-based adaption procedure to improve their value function approximations. After the policy evaluation is completed ($\vb*{\theta}_{1/2} \rightarrow \vb*{\theta}^{\vb*{\mu}^{0}}_{1/2}$, determined in simulation through $\norm{\vb*{\theta}_{1/2}(t-\tau)-\vb*{\theta}_{1/2}(t)}_{2} < 10^{-3}$ where $\tau$ denotes a design parameter chosen as $\tau = 40\,\text{s}$), a synchronous policy improvement follows ($\vb*{\mu}^{1}_{1/2}$ based on \eqref{eq:approx_mu_i} and $\vb*{\theta}^{\vb*{\mu}^{0}}_{1/2}$). Then, the next iteration of the PI algorithm starts with the evaluation of the updated policies. This iterative procedure aims at achieving the optimal weights $\vb*{\theta}^{*}_{1} = \begin{bmatrix} 0.5 & 0 & 1 \end{bmatrix}^{\intercal}$ and $\vb*{\theta}^{*}_{2} = \begin{bmatrix} 0.25 & 0 & 0.5 \end{bmatrix}^{\intercal}$. The PE of $\vb*{\sigma}=\vb*{\sigma}_{1}=\vb*{\sigma}_{2}$ is necessary and sufficient for convergence in the policy evaluation step\footnote{In order to ensure that $V^{\vb*{\mu}}_i$ is learned during the policy evaluation step, we realize an off-policy procedure indirectly by assuming that the controllers calculate $\vb*{\sigma}=\left( \pdv{\vb*{\phi}(\vb*{x})}{\vb*{x}} \right)^\intercal \left( \vb*{f} + \vb*{g}\vb*{\mu} \right)$ from the current state $\vb*{x}$.}. Finally, the convergence to the optimal weights follows from the convergence of the PI algorithm and the uniformly ultimately boundedness of the system states results under admissible initial policies. Since we are using a real-time implementation of a classical PI algorithm, this convergence and stability behavior results from the convergence behavior of the PI algorithm itself (see e.g. \cite[Theorem~1]{Liu.2014} or \cite[Theorem~2]{Song.2017}).

The analytical calculations to derive our excitation signals are performed in Maple. By choosing $\vb*{x}_{\myd} = \begin{bmatrix} \sin(\omega_{1}t)+\sin(\omega_{2}t) & \sin(\omega_{3}t) \end{bmatrix}^{\intercal}$, the set $\Omega$ is defined through 49 excluding conditions on the frequency variables $\vb*{\omega} \in \mathbb{R}^{3}$, e.g. $\matc{1&0&0}\vb*{\omega}\neq 0$. Selecting the DOFs leads to the three exemplary system state trajectories: 
\begin{align} \label{eq:example_xd}
	\begin{aligned}
		\bar{\vb*{x}}_{\myd,1} &= 0.25\begin{bmatrix} \sin(1\,\frac{1}{\text{s}}\!\cdot \! t)\!+\!\sin(2\,\frac{1}{\text{s}}\cdot t) & \!\!\!\sin(3\,\frac{1}{\text{s}}\!\cdot \! t) \end{bmatrix}^{\intercal}, \\
		\bar{\vb*{x}}_{\myd,2} &= 0.25\begin{bmatrix} \sin(0.5\,\frac{1}{\text{s}} \!\cdot \! t)\!+\!\sin(1\,\frac{1}{\text{s}}\! \cdot \! t) & \!\!\!\sin(2\,\frac{1}{\text{s}} \cdot \! t) \end{bmatrix}^{\intercal}, \\
		\bar{\vb*{x}}_{\myd,3} &= 2\bar{\vb*{x}}_{\myd,2}.
	\end{aligned}
\end{align}
Since $\vb*{g}_{2}(\vb*{x})$ in~\eqref{eq:example_g} can be expressed in terms of $\vb*{g}_{1}(\vb*{x})$, let $\vb*{g}_{\RN{1}}(\vb*{x}) = \vb*{g}_{1}(\vb*{x})$ and $\vb*{g}_{\RN{2}}(\vb*{x}) = \vb*{g}_{2}(\vb*{x})$. Thus, $\hat{u}_{2} = \hat{u}_{\RN{2}}$ is set to zero for the excitation signal $\hat{\vb*{u}} = \begin{bmatrix} \hat{u}_{1} & \hat{u}_{2} \end{bmatrix}^{\intercal}$. By considering $\vb*{g}_{1}(\vb*{x})$, $h(\vb*{x})=x_{1}$ can be deduced as an output function for exact state linearization (cf. Assumption~\ref{asm:state_linearization}). With the flatness-based feed-forward control approach, three excitation signals $\hat{u}_{1,j}(t,\vb*{\theta}_1,\vb*{\theta}_2)$ ($j \in \{1,2,3\}$) are computed from $\bar{\vb*{x}}_{\myd,j}$~\eqref{eq:example_xd}. The weights are chosen online from the control laws currently used by the players. Considering Lemma~\ref{lemma:excitation_regulation}, $\hat{\vb*{\sigma}} = \hat{\vb*{\sigma}}_{1} = \hat{\vb*{\sigma}}_{2}$ can be expressed through \eqref{eq:asm_sigma_hat} and $\vb*{M}_1 = \vb*{M}_2 \in \mathbb{R}^{3 \times 8}$ has full row rank. White Gaussian probing noise $\hat{u}_{1,\text{n}}$ is included in the comparison as state of the art reference. To suppress the influence of $\eta_{i}$ and different amplitudes of $\vb*{\sigma}_{i}$, normalizing learning rates $\eta_{i} = \frac{\eta_{\text{v}}}{\sigma^{2}_{i,\max}}$, $\eta_{\text{v}} \gg 1$, are used $\forall i \in \{1,2\}$. Thus, $\rho_{i} \approx 1 - \frac{1}{\eta_{\text{v}}}\frac{2}{T_{i}}\frac{\alpha^{2}_{\RN{1},i}}{\sigma^{2}_{i,\text{max}}}$ follows from \eqref{eq:rho}. Hence, a reduction of $\rho_{i}$ (i.e. faster convergence) can only be achieved by a better fulfillment of the PE inequalities, i.e. higher degrees $\alpha_{\RN{1},i}$ under the same upper boundaries $\sigma_{i,\text{max}}$ or lower time constants $T_{i}$. With $\eta_{\text{v}} = 10000$, the learning rates are: $\hat{u}_{1,\text{n}}$: $\eta_{i} = \frac{\eta_{\text{v}}}{21}$, $\hat{u}_{1,1}$: $\eta_{i} = \frac{\eta_{\text{v}}}{274}$, $\hat{u}_{1,2}$: $\eta_{i}= \frac{\eta_{\text{v}}}{125}$ and $\hat{u}_{1,3}$: $\eta_{i} = \frac{\eta_{\text{v}}}{1670}$, $\forall i \in \{1,2\}$.
 
Fig.~\ref{fig:lambda_min_1} proves the PE of $\vb*{\sigma}$ (cf. Lemma~\ref{lemma:eigenvalue_signals}) with the analytical excitation signals and with the probing noise. Fig.~\ref{fig:error_norm} visualizes for player~2 (player~1 shows similar trends) that all excitation signals achieve the desired behavior $||\tilde{\vb*{\theta}}_{2}||_{2} \rightarrow 0$. The analytical excitation signals outperform the probing noise significantly with a convergence time reduction of $92.7\,\%$ for the best performing signal $\hat{u}_{1,1}(t)$ compared to the state of the art reference. Due to the normalizing learning rates this improvement is traced back to a better fulfillment of the PE conditions and scaling $\vb*{x}_{\myd}$ does not influence the convergence times noticeably (see results of $\hat{u}_{1,2}(t)$ and $\hat{u}_{1,3}(t)$).

\begin{figure}[t]
	\centering
	\includegraphics[width=3.5in]{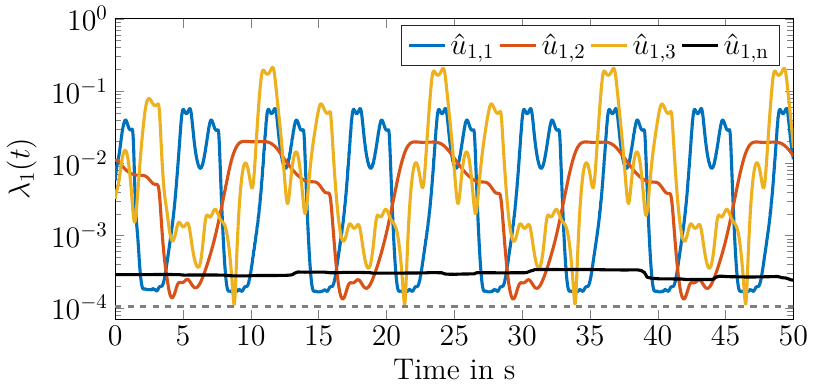}
	\caption{Eigenvalue signal $\lambda_{1}(t)$ calculated with $T_{1}$ ($\hat{u}_{1,\text{n}}$: $T_{1}=69.91\,\text{s}$, $\hat{u}_{1,1}$: $T_{1}=1.14\,\text{s}$, $\hat{u}_{1,2}$: $T_{1}=3.61\,\text{s}$, $\hat{u}_{1,3}$: $T_{1}=1.54\,\text{s}$). The dashed horizontal line corresponds with the numerical threshold value $\alpha_{1}=10^{-4}$.}
	\label{fig:lambda_min_1}
\end{figure}

\begin{figure}[t]
	\centering
	\includegraphics[width=3.5in]{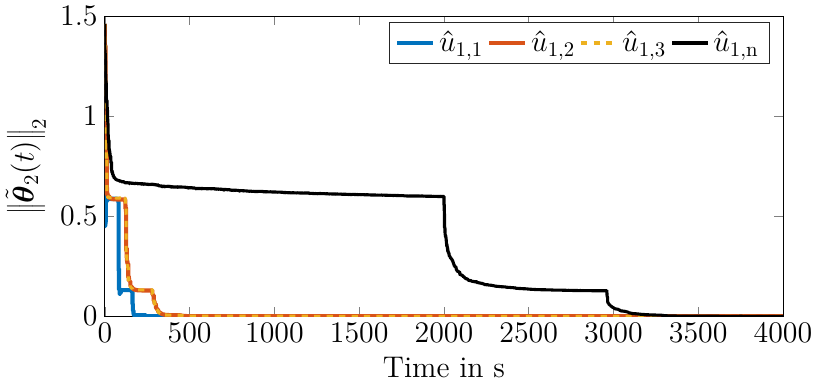}
	\caption{Weight error norm of player 2. The convergence times (${||\tilde{\vb*{\theta}}_2(t)||_2 < 10^{-3}}$) are: $\hat{u}_{1,\text{n}}$: $3392\,\text{s}$, $\hat{u}_{1,1}$: $247\,\text{s}$, $\hat{u}_{1,2}$: $482\,\text{s}$, $\hat{u}_{1,3}$: $477\,\text{s}$.}
	\label{fig:error_norm}
\end{figure}


\section{Conclusion} \label{sec:conclusion}
In this paper, we propose sufficient conditions under which a polynomial transformation of a signal is PE. These conditions are then used for the analytical calculation of excitation signals in ADP while providing degrees of freedom. Thus, our exemplary excitation procedure based on our conditions for PE provides capabilities to design application-specific excitation. The numerical example shows that all analytically calculated excitation signals ensure the fulfillment of the PE conditions along with outperforming state of the art approaches represented by white Gaussian probing noise. The derived signals are able to reduce the convergence time to the optimal behavior of an ADP controller by up to $92.7\,\%$ in this example.


\appendices

\section{Proof of Lemma~\ref{lemma:trig_simplification}}\label{app:proof_lemma_trig_simplification}
\begin{proof}
	Inserting one element $x_{\myd,o}$ of $\vb*{x}_{\myd}$~\eqref{eq:assumption_x_d} in one factor $x^{f_{o,\bar{h}}}_{o}$ of $\phi_{\bar{h}}(\vb*{x})$~\eqref{eq:assumption_phi} and applying the multinomial theorem, power-reduction formulae and product-to-sum identities yields
	\begin{align}\label{eq:bew_trig_simp_1}
		x^{f_{o,\bar{h}}}_{\myd,o} &= \left( \sum_{j=1}^{m} g_{j,o}\sin(\omega_{j}t) + \sum_{j=1}^{m} \bar{g}_{j,o}\cos(\omega_{j}t) \right)^{f_{o,\bar{h}}} \nonumber \\
		&= \sum_{l=1}^{L^{(o)}_{\bar{h}}} a^{(o)}_{l,\bar{h}} \sin(\sum_{j=1}^{m} b^{(o)}_{j,l,\bar{h}} \omega_{j} t) \nonumber \\
		&\phantom{=}+ \sum_{k=1}^{K^{(o)}_{\bar{h}}} c^{(o)}_{k,\bar{h}} \cos(\sum_{j=1}^{m} d^{(o)}_{j,k,\bar{h}} \omega_{j} t) + e^{(o)}_{\bar{h}}. 
	\end{align}
	In \eqref{eq:bew_trig_simp_1}, the indices $\bar{h}$ and $(o)$ of the parameters $a^{(o)}_{l,\bar{h}}, c^{(o)}_{k,\bar{h}} \in \mathbb{R}_{\neq 0}$, $e^{(o)}_{\bar{h}}, b^{(o)}_{j,l,\bar{h}}, d^{(o)}_{j,k,\bar{h}} \in \mathbb{R}$ characterize their dependency on the exact element $\phi_{\bar{h}}(\vb*{x}_{\myd})$ and factor $x^{f_{o,\bar{h}}}_{\myd,o}$. In order to state $a^{(o)}_{l,\bar{h}}, c^{(o)}_{k,\bar{h}} \neq 0$, denote the upper sum limits $L^{(o)}_{\bar{h}}, K^{(o)}_{\bar{h}} \in \mathbb{N}_{\geq 0}$ with the same dependencies.	
	Because \eqref{eq:bew_trig_simp_1} holds $\forall o \in \{1,\dots,n\}$, for each element
	\begin{align} \label{eq:phi_x_d}
		\phi_{\bar{h}}(\vb*{x}_{\myd}) &= \bar{f}_{\bar{h}}\prod_{o=1}^{n} x^{f_{o,\bar{h}}}_{\myd,o} = \sum_{l=1}^{L_{\bar{h}}} a_{l,\bar{h}} \sin(\sum_{j=1}^{m} b_{j,l,\bar{h}} \omega_{j} t) \nonumber \\
		&\hphantom{=}+ \sum_{k=1}^{K_{\bar{h}}} c_{k,\bar{h}} \cos(\sum_{j=1}^{m} d_{j,k,\bar{h}} \omega_{j} t) + e_{\bar{h}}
	\end{align}
	follows. Eq.~\eqref{eq:phi_x_d} results from expanding the product of the sums~\eqref{eq:bew_trig_simp_1} and applying product-to-sum identities again. As a result of the requirements, $f_{o,\bar{h}} \neq 0$ for at least one $o$, $m \geq 1$, $g_{j,o} \neq 0$ or $\bar{g}_{j,o} \neq 0$ for at least one $j$, in Assumption~\ref{asm:phi_h} and Assumption~\ref{asm:xd_o} it follows that $L_{\bar{h}}+K_{\bar{h}}\geq1$ ($\forall \bar{h} \in \{1,\dots,h\}$). Thus, \eqref{eq:phi_x_d} reduces to \eqref{eq:phi_x_d_short}.
\end{proof}

\section{Proof of Lemma~\ref{lemma:excitation_regulation}}\label{app:proof_lemma_excitation_regulation}
\begin{proof}
	The proof is divided into two parts. First, we show that $\vb*{\sigma}_i$ is PE if $\hat{\vb*{\sigma}}_i$ is PE. Second, we prove the PE of $\hat{\vb*{\sigma}}_i$.  
	
	With \eqref{eq:sigma_hat+epsilon}, \eqref{eq:pe_cond_2} yields
	\begin{align} 
		&\frac{1}{T}\int_{t}^{t+T} \left|\left(\hat{\vb*{\sigma}}_{i}+\vb*{\varepsilon}_{1,i}+\vb*{\varepsilon}_{2,i}\right)^{\intercal}\vb*{\iota}\right| \,\mathrm{d}\tau \geq \frac{1}{T}\int_{t}^{t+T} \abs{\hat{\vb*{\sigma}}_{i}^{\intercal}\vb*{\iota}} \,\mathrm{d}\tau \nonumber \\
		&- \frac{1}{T}\int_{t}^{t+T} \abs{\vb*{\varepsilon}_{1,i}^{\intercal}\vb*{\iota}} \,\mathrm{d}\tau - \frac{1}{T}\int_{t}^{t+T} \abs{\vb*{\varepsilon}_{2,i}^{\intercal}\vb*{\iota}} \,\mathrm{d}\tau \nonumber \\ 
		&\geq \alpha_{\RN{1},i} - \bar{\varepsilon}_{\mysigma,i} - \frac{1}{T} M_{\varepsilon}(T).\label{eq:bew_excitation_track_1}
	\end{align}
	The last inequality in \eqref{eq:bew_excitation_track_1} follows from the PE of $\hat{\vb*{\sigma}}_{i}$, the boundedness of $\vb*{\varepsilon}_{1,i}$ and the convergence behavior of $\vb*{\varepsilon}_{2,i}$, which leads to
	\begin{align} \label{eq:bew_excitation_track_2}
		\int_{t}^{t+T} \abs{\vb*{\varepsilon}_{2,i}^{\intercal}\vb*{\iota}} \,\mathrm{d}\tau \leq M_{\varepsilon}(T),
	\end{align}
	where $M_{\varepsilon}(T)$ is an upper bound dependent on $T$. Since $M_{\varepsilon}(T)$ saturates with increasing $T$, a $\bar{T}$ exists such that $\alpha_{\RN{1},i} > \bar{\varepsilon}_{\mysigma,i} + \frac{1}{T} M_{\varepsilon}(T)$ ($\forall T > \bar{T}$) if $\bar{\varepsilon}_{\mysigma,i}$ is sufficiently small. According to \eqref{eq:pe_cond_2}, the PE of $\vb*{\sigma}_{i}$ results under the assumed PE of $\hat{\vb*{\sigma}}_{i}$. PE of $\hat{\vb*{\sigma}}_{i}$ follows from \eqref{eq:asm_sigma_hat} using \cite[Lemma~6.1]{Narendra.2005} since the temporal derivative of $\myi$ is PE and $\vb*{M}_i$ has full row rank.
\end{proof}

\section{Proof of Lemma~\ref{lemma:eigenvalue_signals}}\label{app:proof_lemma_eigenvalue}
\begin{proof}
	Definition~\ref{def:pe_new} yields
	\begin{align} \label{eq:bew_eigvalue_1}
		\begin{aligned}
			\vb*{\Xi}_{1}(t) \coloneqq \int_{t}^{t+T} \vb*{\sigma}\vb*{\sigma}^{\intercal} \,\mathrm{d}\tau \succeq \alpha\vb*{I}. 
		\end{aligned}
	\end{align}
	Due to the symmetry of $\vb*{\Xi}_{1}(t)$, \eqref{eq:bew_eigvalue_1} leads to
	\begin{align} \label{eq:bew_eigvalue_2}
		\begin{aligned}
			&\vb*{\Xi}_{1}(t) = \vb*{M}(t)\vb*{G}(t)\vb*{M}^{-1}(t) \succeq \alpha\vb*{I} \\
			&\Rightarrow \lambda_{1}(t) = \lambda_{\min}(\vb*{\Xi}_{1}(t)) = \vb*{\iota}^{\intercal}(t)\vb*{G}(t)\vb*{\iota}(t) \geq \alpha,
		\end{aligned}
	\end{align}
	where $\vb*{G}(t)$ is a diagonal matrix containing the eigenvalues of $\vb*{\Xi}_{1}(t)$, $\vb*{M}(t)$ a regular matrix for diagonalizing $\vb*{\Xi}_{1}(t)$ and $\vb*{\iota}(t)$ the unit vector extracting the minimal eigenvalue of $\vb*{\Xi}_{1}(t)$.
	
	Suppose $\vb*{\sigma}$ is PE. Thus, from Definition~\ref{def:pe_new}
	\begin{align} \label{eq:bew_eigvalue_4}
		\begin{aligned}
			\vb*{\Xi}_{2}(kT + t_{0}) \coloneqq \sum_{j=0}^{k-1} \underbrace{\int_{jT + t_{0}}^{(j+1)T + t_{0}} \vb*{\sigma}\vb*{\sigma}^{\intercal} \,\mathrm{d}\tau}_{\succeq \alpha\vb*{I}} \succeq k\alpha \vb*{I}
		\end{aligned}
	\end{align}
	results. By analogy to \eqref{eq:bew_eigvalue_2}, from \eqref{eq:bew_eigvalue_4}
	\begin{align} \label{eq:bew_eigvalue_5}
		\begin{aligned}
			\lambda_{2}(kT + t_{0}) = \lambda_{\min}(\vb*{\Xi}_{2}(kT + t_{0})) \geq k\alpha
		\end{aligned}
	\end{align}
	follows. Furthermore, since for $kT + t_{0} < t < (k+1)T + t_{0}$
	\begin{align} \label{eq:bew_eigvalue_6}
		\begin{aligned}
			\vb*{\Xi}_{2}(t) = \vb*{\Xi}_{2}(kT + t_{0}) + \underbrace{\int_{kT + t_{0}}^{t} \vb*{\sigma}\vb*{\sigma}^{\intercal} \,\mathrm{d}\tau}_{\succeq \vb*{0}} \succeq \vb*{\Xi}_{2}(kT)
		\end{aligned}
	\end{align}
	holds, the increase of \eqref{eq:eigvalue_signal_2} is monotonic.
\end{proof}



\end{document}